
\documentclass[10pt]{article}
\usepackage{amsmath, amsthm, latexsym, amssymb,url}
\usepackage{amsfonts}
\usepackage{inputenc}
\usepackage{epsfig}
\usepackage{xcolor}
\usepackage{graphicx}
\usepackage{yhmath}
\usepackage{mathdots}
\usepackage{mathtools} 
\usepackage{pifont}
\usepackage{mathtools}

\usepackage{tikz}
\input xy
\xyoption{all}

\newcommand{\shrinkmargins}[1]{
  \addtolength{\textheight}{#1\topmargin}
  \addtolength{\textheight}{#1\topmargin}
  \addtolength{\textwidth}{#1\oddsidemargin}
  \addtolength{\textwidth}{#1\evensidemargin}
  \addtolength{\topmargin}{-#1\topmargin}
  \addtolength{\oddsidemargin}{-#1\oddsidemargin}
 \addtolength{\evensidemargin}{-#1\evensidemargin}
  }

\shrinkmargins{.7}

\theoremstyle{plain}

\newtheorem{theorem}{Theorem}[section]
\newtheorem{corollary}[theorem]{Corollary}
\newtheorem{lemma}[theorem]{Lemma}
\newtheorem{proposition}[theorem]{Proposition}

\newtheorem*{teo}{Theorem}

\newtheorem{definition}[theorem]{Definition}

\theoremstyle{remark}
\newtheorem{remark}[theorem]{Remark}

\theoremstyle{definition}
\newtheorem{example}[theorem]{Example}

\newcommand{\Z}{\mathbb{Z}}
\newcommand{\R}{\mathbb{R}}
\newcommand{\Q}{\mathbb{Q}}

\def\L{{\mathcal L}}

\def\v{{\mathbf v}}

\def\bv{{\boldsymbol v}}

\def\bx{{\boldsymbol x}}

\def\bo{{\boldsymbol 0}}

\def\spn{\operatorname{span}}

\def\dim{\operatorname{dim}}
\def\det{\operatorname{det}}

\def\ker{\operatorname{ker}}

\def\SL{\operatorname{SL}}
\def\SO{\operatorname{SO}}

\DeclareMathOperator{\vol}{vol}

\begin{document}

\thispagestyle{empty}
\setcounter{tocdepth}{7}

\title{Bases of minimal vectors in tame lattices.}
\author{Mohamed Taoufiq Damir\thanks{M. T. Damir's work was supported in part by the Academy of Finland, under Grant No. 318937 (Aalto University PROFI funding to C. Hollanti). }, \ Guillermo Mantilla-Soler\thanks{G. Mantilla -Soler's work was supported in part by the Aalto Science Institute.} }


\date{}

\maketitle

\begin{abstract}

Motivated by the behavior of the trace pairing over tame cyclic number fields, we introduce the notion of tame lattices. Given an arbitrary non-trivial lattice $\L$ we construct a parametric family of full-rank sub-lattices $\{\L_{\alpha}\}$ of $\L$ such that whenever $\L$ is tame each $\L_{\alpha}$ has a basis of minimal vectors. Furthermore, for each $\L_{\alpha}$ in the family  a basis of minimal vectors is explicitly constructed.  

\end{abstract}

 \textbf{Keywords: } Well-rounded lattices, number fields, minimal bases.

\textbf{Mathematics Subject Classification: }Primary 11H06, 11H50; Secondary: 11R21.
 
 \section{Introduction}

The purpose of this paper is to introduce the notion of {\it tame lattices}, and to prove the following:

\begin{teo}[cf. Theorem \ref{main}]
Let $\L \subseteq \R^{N}$ be a tame lattice with Lagrangian basis $\{e_{1},...,e_{N}\}$. Let $a:=\langle e_{1}, e_{1}\rangle$ and $h=-\langle e_{1}, e_{2}\rangle.$  Let $r, s$ be integers such that  $0 \neq |r| <N$ and let $m:=r+sN.$ Suppose that 
\[\frac{Na-1}{N^2-1}\leq \left(\frac{m}{r}\right)^2\leq \frac{(aN-1)(N+1)}{N-1}.\]

The lattice $\L_{\v_1}^{(r,s)}$ is a sub-lattice of $\L$ of index $m|r|^{N-1}$, minimum \[\lambda_1(L_{\v_1}^{(r,s)})=ar^2+\frac{m^2-r^2}{N}\] and with basis of minimal vectors \[\{re_1+s\v_1,re_2+s\v_1,\dots,re_N+s\v_1\}.\]
\end{teo}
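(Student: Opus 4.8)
The plan is to pass to the coordinates given by the Lagrangian basis and reduce everything to an integer minimization problem. Recall that a Lagrangian basis $\{e_1,\dots,e_N\}$ of a tame lattice satisfies $\langle e_i,e_i\rangle=a$ and $\langle e_i,e_j\rangle=-h$ for $i\neq j$, with $h=\tfrac{a-1}{N-1}$; in particular $\langle e_i,\v_1\rangle=1$ and $\langle\v_1,\v_1\rangle=N$ for $\v_1=e_1+\cdots+e_N$. Writing $\bx=\sum_i b_ie_i$ with $b\in\Z^N$, and putting $S=\sum_ib_i$, $Q=\sum_ib_i^2$, the Gram matrix $(a+h)I-hJ$, where $J$ is the $N\times N$ all-ones matrix, yields
\[\langle\bx,\bx\rangle \;=\; \frac{(aN-1)\,Q-(a-1)\,S^2}{N-1}\;=:\;q(b).\]
Since $\L_{\v_1}^{(r,s)}$ is the $\Z$-span of the columns of $rI+sJ$, its elements are exactly the $b=\ell s\,\mathbf{1}+rc$ with $\ell\in\Z$, $c\in\Z^N$ and $c_1+\cdots+c_N=\ell$; for such $b$ one has $S=\ell m$ and $Q=N\ell^2s^2+2\ell^2rs+r^2\|c\|^2$.

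First the routine parts. As $0\neq|r|<N$, the integer $m=r+sN$ is nonzero (else $N\mid r$), so $\det(rI+sJ)=r^{N-1}m\neq0$; hence $\bw_i:=re_i+s\v_1$, $i=1,\dots,N$, are linearly independent, form a basis of $\L_{\v_1}^{(r,s)}$, and $[\L:\L_{\v_1}^{(r,s)}]=|r|^{N-1}|m|=m|r|^{N-1}$ (replacing $(r,s)$ by $(-r,-s)$ if necessary, which changes neither the lattice, the quantity $(m/r)^2$, nor the claimed minimum, we may assume $m>0$). The vector $\bw_i$ is the member with $\ell=1$, $c=e_i$; substituting into $q$ and simplifying gives $q(\bw_i)=ar^2+\tfrac{m^2-r^2}{N}=:N_0$, which can also be written $N_0=\tfrac{(aN-1)r^2+m^2}{N}$. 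Thus $\lambda_1(\L_{\v_1}^{(r,s)})\le N_0$, and it remains to prove $q(b)\ge N_0$ for every nonzero $b\in\L_{\v_1}^{(r,s)}$.

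Now the heart of the argument. Fix the weight $\ell$ and minimize $q$ over the corresponding fibre. Since $q$ is strictly increasing in $Q$ once $S=\ell m$ is fixed, and $Q$ depends on $c$ only through $\|c\|^2$, we must minimize $\|c\|^2$ over $c\in\Z^N$ with $c_1+\cdots+c_N=\ell$. By convexity this minimum is attained at the balanced vector and equals $\tfrac{\ell^2-\rho^2}{N}+\rho$ with $\rho:=\ell\bmod N\in\{0,\dots,N-1\}$ — except when $\ell=0$, where one must take $c\neq0$ and hence $\|c\|^2\ge2$. Substituting and simplifying, the fibrewise minimum for $\ell\neq0$ collapses to
\[f(\ell)\;=\;\frac{\ell^2m^2}{N}+\frac{(aN-1)\,r^2\,\rho(N-\rho)}{N(N-1)},\qquad\text{so that}\qquad f(1)=N_0,\]
and, using $\rho(N-\rho)-(N-1)=(\rho-1)(N-1-\rho)$,
\[f(\ell)-N_0\;=\;\frac{(\ell^2-1)\,m^2}{N}+\frac{(aN-1)\,r^2\,(\rho-1)(N-1-\rho)}{N(N-1)}.\]
Three cases finish the proof. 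If $N\nmid\ell$ then $1\le\rho\le N-1$, so both terms are $\ge0$ (as $\ell\neq0$) and $f(\ell)\ge N_0$. If $N\mid\ell$ with $\ell\neq0$ then $\rho=0$ and $\ell^2\ge N^2$, so $f(\ell)-N_0\ge\tfrac1N\big((N^2-1)m^2-(aN-1)r^2\big)\ge0$ exactly by the lower hypothesis $(m/r)^2\ge\tfrac{aN-1}{N^2-1}$ — the extremal vector here being $m\v_1$. If $\ell=0$ then $q(b)=\tfrac{(aN-1)r^2\|c\|^2}{N-1}\ge\tfrac{2(aN-1)r^2}{N-1}$, which is $\ge N_0$ exactly by the upper hypothesis $(m/r)^2\le\tfrac{(aN-1)(N+1)}{N-1}$ — the extremal vector being $r(e_i-e_j)$. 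Hence $\lambda_1(\L_{\v_1}^{(r,s)})=N_0$, attained at each $\bw_i$, and since the $\bw_i$ already form a basis we are done.

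The main obstacle is the fibrewise minimization together with the algebraic collapse of $f(\ell)$: one must see that among lattice vectors of a given $\v_1$-weight $\ell$ the shortest is the balanced one, and then check that $f(\ell)$ reduces to $\tfrac{\ell^2m^2}{N}+\tfrac{(aN-1)r^2\rho(N-\rho)}{N(N-1)}$, which makes it transparent that the only constraints not satisfied automatically come from $\ell=0$ and from $N\mid\ell$, and that these two constraints are precisely the two inequalities in the hypothesis. Correctly describing $\L_{\v_1}^{(r,s)}$ in the $e$-coordinates — in particular that weight $\ell$ forces every coordinate to be $\equiv\ell s\pmod r$ — is the other point needing care; everything else is mechanical.
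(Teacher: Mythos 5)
Your proof is correct, and it follows essentially the same route as the paper: you fibre the sublattice by the value of the weight $\ell$ (the paper's trace value $d$), show each fibre's minimum is attained at the balanced vector (the paper's Lemma \ref{Restar} and Corollary \ref{minlemma}), and reduce the two hypothesis inequalities exactly to the $\ell=0$ fibre (vectors $r(e_i-e_j)$, i.e.\ the paper's $2A(a+h)$ via $\L^{0}\cong(a+h)\mathbb{A}_{N-1}$) and the $N\mid\ell$ case (the vector $m\v_1$, i.e.\ the paper's condition $aA+B\le f(\v_1)$). The only difference is presentational: your closed-form $f(\ell)$ and the identity $\rho(N-\rho)-(N-1)=(\rho-1)(N-1-\rho)$ replace the paper's parabola-monotonicity argument (Proposition \ref{prop6}) and its explicit $\mathbb{A}_{N-1}$ identification, which is a tidy but equivalent bookkeeping of the same computation.
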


For a precise definition of tame lattice see Definition \ref{Lagrangian}.  However, a quick way to picture a tame lattice is that it is a lattice with a Gram matrix of the form 

\[\left[
  \begin{array}{cccc}
    a & -h & \ldots & -h \\
   -h & \ a & \ddots & \vdots \\
    \vdots & \ddots & \ddots & -h \\
    -h & \ldots & -h & \ a \\
  \end{array}
\right]\,.\]

for some $a,h$ such that $a-h(N-1)=1$.

\subsection{Artihmetic motivation}

It is our believe that providing the path as to how we got our definitions and  results is an important part of the clarification  and development of mathematical ideas. We briefly show  how the ideas, statements and definitions of this paper came to be.\\

Let $p$ be an odd prime and let $K$ be a Galois number field of degree $p$. In \cite[IV.8]{CoPe} Conner and Perlis showed that if no rational prime has wild ramification in $K$, i.e., $K$ is tame, there exists an integral basis  of $O_
{K}$ such that 

\begin{enumerate}

\item $e_{1}+...+e_{p}=1$.
\item ${\rm Tr}_{K/\Q} (e_{i})  =1$ for all $1 \leq i \leq p$.
\item ${\rm Tr}_{K/\Q} (e_{i}e_{i})  =  {\rm Tr}_{K/\Q} (e_{j}e_{j})$ for all $1 \leq i, j \leq p$.
\item $ {\rm Tr}_{K/\Q} (e_{i}e_{j}) = {\rm Tr}_{K/\Q} (e_{k}e_{l})$ for all $1 \leq i, j, k,l \leq p$ with $i\neq j$ and $k \neq l$.
\end{enumerate}

Conner and Perlis coined the term {\it Lagrangian basis} for a basis of $O_{K}$ satisfying the conditions above, this apparently was inspired by some definitions of Hilbert (see \cite[IV.8.1]{CoPe}).

Recently in n \cite{sueli} and  \cite{oliviera} the authors have used a version of the result above to construct bases of well rounded lattices defined by certain sub-modules of the ring of integers of a prime degree Galois number field. Suppose that $p$ is an odd prime and that $K$ is a degree $p$ tame  Galois number field. In \cite{sueli} and  \cite{oliviera} a set of conditions on positive integers $m \equiv 1 \pmod{p}$  are given so that the sub-lattice $\{x \in O_{K}: {\rm Tr}_{K/\Q}(x) \equiv 0\pmod{m}\}$ of $O_{K}$  has a minimal basis.

Such construction can be generalized to a bigger family of number fields by studying when a notion of Lagrangian basis exists. In fact, the results in \cite{BoMa} show that when $K$ is abelian, not necessarily of prime degree but of prime conductor, a Lagrangian basis exists. Since there are number fields with Lagrangian basis and with neither prime conductor or degree, see for instance example \ref{ex3}, the construction of \cite{sueli} and  \cite{oliviera} can be generalized even further to a bigger class of number fields. 

The concept of tame lattices (see \S\ref{TameLattices}) came by axiomatizing the linear map ${\rm Tr}_{K/\Q}: O_{K} \to \Z$ and conditions (1)-(4). Our definition generalizes the cases of number fields previously mentioned, and at the same time has proven to be useful to construct several interesting lattices beyond the ring of integers of number fields. For instance, the cubic lattice, the checkboard lattice, and $\mathbb{A}_{n}$ all can be obtained as examples of a  construction related to such lattices. In a further work to appear elsewhere we show how their duals, $E_{8}$ and other lattices can be constructed following the ideas developed here.

\begin{remark}
Suppose that $K$ is a tame Galois number field of prime degree.  If one uses the Theorem \ref{main} with  $r=1$, applied to the tame lattice $O_{K}$, the results of \cite{sueli} and  \cite{oliviera} are recovered.
 \end{remark}

\subsection{Outline of the paper} 
 
The paper is organized as follows. Section \ref{BackgroundLattices} introduces the necessary background used throughout the article. In Section \ref{sub}, we first construct a sub-lattice of a given lattice using a co-dimension one subspace inspired by the trace map of a field extension. Using this, we obtain a specific family of full-rank sub-lattices $\L_{\v_1}^{(r,s)}$ of a given lattice $\L$ (see Definition \ref{mainsublat}).  We then present particular examples of the sub-lattices $\L_{\v_1}^{(r,s)}$ (Lemma \ref{Cong1}), where $\L$ is obtained as the Minkowski embedding of the ring of integers of a tame number field. The rest of Section \ref{sub} is dedicated to introducing and calculating some invariants of tame lattices.
In Section \ref{Condi}, we study $\lambda_1(L_{\v_1}^{(r,s)})$, the length of the shortest non-zero vector in $\L_{\v_1}^{(r,s)}$, where $\L$ is a tame lattice. We conclude the section by proving Theorem \ref{main}, where we provide conditions on the sub-lattices $\L_{\v_1}^{(r,s)}$, i.e., $r$ and $s$ for which $\L_{\v_1}^{(r,s)}$ has a minimal basis.

\section{General background on Lattices}\label{BackgroundLattices}

A \textit{lattice} $\L$ in $\mathbb{R}^N$ is a discrete additive subgroup of $\mathbb{R}^N$. If $t$ is the dimension of the sub-space generated by $\L$, it can be shown that  $\L$ is a free $\Z$-module of rank $t$.  In other words, a rank $t$ lattice in $\R^{N}$ is a set of  the form
\begin{equation}
\label{def}
  \L=\Big\{\sum_{i=1}^{t}a_i e_i~|~a_i\in\mathbb{Z}\Big\},   
\end{equation}

where $\{e_1,\dots,e_t\} \subseteq \R^{N}$  is a set of independent vectors. We say that $\L$ is {\it full rank} if $t=N$.
\begin{remark}
In this paper we will deal mostly with full lattices. Thus, from this point whenever we say lattice we mean full unless we explicitly say otherwise. 
\end{remark}
We call  $ M:= [e_1 | \cdots | e_n]$  a \textit{generator matrix} of $\L$, where the vectors $e_1,\dots,e_n$ are viewed as column vectors, so that $\L=M\Z^{N}$.  The matrix $ G=M^T M$ is called a \textit{gram matrix} of $\L$. The lattice $\L$ is said to be {\it integral} if the matrix $G$ has all its entries in $\Z$. If $\L$ is a full lattice, its {\it volume} is defined as $\vol(\L):=\sqrt{\det(G)}$. This definition is independent of the choice of Gram matrix. It can be shown that for  $\L'$ a full sub-lattice of $\L$ we have that  \[[\L:\L']=\frac{\vol(\L')}{\vol(\L)}.\]

An important invariant in the study of the sphere packing problem is the \textit{center density} of $\L$ defined by
\[\delta(\L):=\frac{\lambda_1(\L)^{N/2}}{2^N\vol(\L)}.\]

Given a lattice $\L$, the quantity $\displaystyle \lambda_1(\L):= \min_{\bx \in \L \setminus \{\bo\}} \|\bx\|^2$ is called \textit{the minimum of }$\L$. Finally, the set of {\it minimal vectors} in $\L$ is the set of vectors of minimum norm, i.e., 

\begin{equation}
S(\L) := \{x\in \L: ||x||^2 = \lambda_1(\L) \}.
\end{equation}

The cardinality of $S(\L)$ is known as the \textit{kissing number} of $L$. For a detailed exposition on lattices we refer the reader to \cite{conway2013sphere}.

\subsection{Well-rounded lattices}\label{TopoLattices}

In \cite{conway1995lattice} Conway and Sloane constructed the first example of an $11$-dimensional lattice that is generated by its minimal vectors but in which no set of $11$ minimal vectors form a basis. This construction implies that such lattices exist in every dimension $N\geq 11$. In \cite{martinet1} and \cite{martinet2}, Martinet showed that a lattice of dimension $N\leq 8$, which is generated by its minimal vectors also has a basis of minimal vectors. This study was completed in \cite{martinet3} by Martinet and Schürmann, where the authors showed a similar result for $9$-dimensional lattices and provided a counter-example in dimension $10$.  Thus, the $\Z$-span of $S(\L)$ the set of minimal vectors in a lattice $\L$ can span $\L$ without containing a basis. If $\R \otimes \L=\spn_{\R}(S(\L))$, $\L$ is called \textit{well-rounded.} A stronger condition is when $\L=\spn_{\Z}(S(\L))$, in this case $\L$ is called \textit{strongly well-rounded}. An even stronger condition is when $\L$ has a basis of minimal vectors, that is when  $S(\L)$ contains a basis of $\L$.  Computationally, checking well-roundedness, the weakest of the three conditions mentioned above, for an arbitrary lattice is an NP-hard problem \cite{khot2005hardness}. Deciding whether or not $\L$ is well-rounded  is equivalent to finding $S(\L)$ the set of minimal vectors in $\L$. For a lattice $\L$ of dimension at most $4$, well-roundedness, strong well-roundedness, and having a minimal basis are equivalent conditions (see for instance \cite{pohst1981computation}); however, as mentioned previously, for higher dimensions this is not always the case.\\

\begin{example}
The following lattices are examples of lattices with a minimal basis \cite{conway2013sphere}.
\begin{enumerate}
 \item The orthogonal lattice $\Z^n$.
    \item The checkerboard lattice $$\mathbb{D}_n=\Big\{(x_1,\dots,x_n)\in\Z^n~:~\sum_{i=1}^n x_i\equiv 0\pmod2\Big\}.$$
    \item The $\mathbb{A}_n$ lattice 
    $$\mathbb{A}_n=\Big\{(x_1,\dots,x_{n+1})\in\Z^{n+1}~:~\sum_{i=1}^{n+1} x_i=0\Big\}.$$

\end{enumerate}
\end{example}

Every SWR(strongly well-rounded) lattice is WR(well-rounded). The following example illustrates that the converse does not always hold. To see examples of lattices that are SWR but that have no minimal basis see \cite{conway1995lattice}.
\begin{example} 
Let $N, k$ be positive integers with $N>k^2>1$. Let $\L$ be the lattice generated by $\Z^N$ and the vector $v=(1/k,\dots,1/k)$. By the hypothesis on $N$ and $k$, $S(\L)$ consists of the standard basis vectors, i.e, $\pm e_i=(0,\dots,\pm 1,\dots,0)$ and $\Z^{N} \neq \L $. In particular, $\L$ is WR. On the other hand $\spn_{\Z}(S(L))=\Z^N\neq \L$ so $\L$ is not SWR.

\end{example}

Well-rounded lattices appear in various arithmetic and geometric problems. In particular, in discrete geometry, number theory, and topology. For example, a classical theorem due to Voronoi \cite{voronoi} implies that the local maxima of the sphere packing function are all realized at well-rounded lattices. 
In \cite{mcmullen} well-rounded lattices have been investigated in the context of Minkowski conjecture. Furthermore, topological properties of the set of well-rounded lattices have also been of interest. To name just a few examples, Ash in \cite{ash} proved that the space of all (unimodular) lattices retracts to the space of well-rounded lattices. More recently, a result by Solan \cite{solan2019stable} state that for any lattice $\L$ there exists a diagonal unimodular real matrix $a$ with positive entries such that $a\cdot L$ is a well-rounded lattice.

In addition to their arithmetic and geometric appeal, well-rounded lattices are studied in communication theory. In particular, in physical layer communication reliability \cite{gnilke} and security \cite{Damir}. 

It is also worth mentioning that most of the lattice-based cryptographic protocols lays on the hardness of the shortest vector problem. On the other hand, the problem of determining all the successive minima of an arbitrary lattice is believed to be strictly harder \cite{micciancio}. However, if the lattice is well-rounded, these two problems are equivalent.
 
Hence, from a theoretical point of view, as well as a practical, it is of interest to explicitly construct well-rounded lattices and to study when a given lattice has a well-rounded sub-lattice or a sub-lattice generated by its minimal vectors. It turns out that among all lattices, well-rounded lattices are scarce. So in a probabilistic sense, such lattices are difficult to find.
Studying the geometric structure of well-rounded sub-lattices, strongly well-rounded, and lattices with a minimal basis is a non-trivial question that has been investigated by several authors; for instance, in dimension $2$ work in this direction has been done in \cite{fukshansky}, \cite{baake}, and \cite{kuhnlein}.

We define two lattices $\L$ and $\L'$ to be \textit{similar} if we can obtain $\L$ from $\L'$ using a rotation and a real dilation of $\L'$. It is clear that the well-roundedness property is invariant under similarity, so it is natural to consider the space of lattices of fixed volume $1$, namely,  
\[\mathcal{S}_N= \SO_N(\R)\backslash \SL_N(\mathbb{R})/\SL_N(\mathbb{Z}).\]

It is well-known that $\mathcal{S}_N$ has a unique measure $\mu_N$ (Haar measure) that is right $\SL_N(\mathbb{R})$-invariant. In fact, the set of WR lattices in $\mathcal{S}_N$ has measure zero with respect to $\mu_N$.  Given a lattice $\L$, the $i^th$ successive minimum is
\[\lambda_i(\L) = \inf\{r~|~ \dim({\rm span}(\L\cap \overline{B}(0, r))\geq i\},\]
where $B(0, r)$ is the closed ball of radius $r$ around $0$. \\

It follows from definitions that a lattice $\L$ is WR if and only if all its successive minima are equal.  With this in mind, we can see the set of WR lattices of dimension $N$ is a set defined by $N-1$ (successive minima) equalities. This shows that the space of WR lattices is not a full-dimensional space in $\mathcal{S}_N$, and in particular it has zero measure. The following figure illustrates $\mathcal{S}_2$ and $\mathcal{W}_2$ the sets of similarity classes and well-rounded planar lattices \cite{damir2019well}.
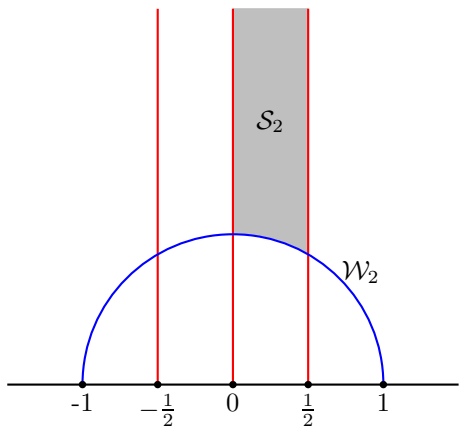
\begin{figure}[h]
\centering
\begin{tikzpicture}\label{w2}
    \filldraw[color=gray!50](3,2)--(3,5)--(4,5)--(4,1.75) arc[x radius=2, y radius=2, start angle=60, end angle=90]--cycle;
     \draw[red, thick](2,0)--(2,5);
     \draw[red, thick](3,0)--(3,5);
     \draw[red, thick](4,0)--(4,5);
     \draw[thick,blue] (5,0) arc[x radius=2, y radius=2, start angle=0, end angle=180];
     \draw[thick] (0,0)--(6,0);
     
     \filldraw[thick](1,0) circle (1pt) node[anchor=north]{-1};
     \filldraw[thick](2,0) circle (1pt) node[anchor=north]{$-\frac{1}{2}$};
     \filldraw[thick](3,0) circle (1pt) node[anchor=north]{0};
     \filldraw[thick](4,0) circle (1pt) node[anchor=north]{$\frac{1}{2}$};
     \filldraw[thick](5,0) circle (1pt) node[anchor=north]{1};
     \node(A) at (3.5,3.5){$\mathcal{S}_2$};
     \node(B) at (4.7,1.5){$\mathcal{W}_2$};
     \end{tikzpicture}
           \caption{Similarity classes of WR planar lattices.}
\end{figure}

\section{A generic construction of a family of sub-lattices of a given lattice}\label{sub}

In this section we construct a family of sub-lattices of a given lattice. Such a construction, when specialized to tame lattices, will lead to a family of lattices that have a minimal basis.

\subsection{Sub-lattices from co-dimension one linear maps}

Let $N$ be a positive integer and let  $\L\subset\R^N$ be a full lattice. Suppose that ${\rm T}: \L \to \Z$ is a non-trivial linear map. Let $\v_{1} \in \L \setminus \ker {\rm T}$.

\begin{proposition}\label{A} Let $r, s$ be some integers and let  $m:=r +s{\rm T}(\v_{1})$. The map 
\begin{align*}
\Phi_{(r,s)}: \L&\rightarrow \L\\
x &\mapsto rx+s{\rm T}(x)\v_1
\end{align*}
is linear. Moreover, ${\rm T} \circ \Phi_{(r,s)} = [m] \circ {\rm T}$ where $[m]: \Z \to \Z$ is the usual multiplication by $m$ map. In particular, if $r$ and $m$ are different from zero, the map $\Phi_{(r,s)}$ is an injection.
\end{proposition}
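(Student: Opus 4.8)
The plan is to verify the three assertions of Proposition~\ref{A} in sequence, since each is essentially a direct computation. First I would check linearity of $\Phi_{(r,s)}$: for $x, y \in \L$ and $\lambda, \mu \in \Z$, we have $\Phi_{(r,s)}(\lambda x + \mu y) = r(\lambda x + \mu y) + s\,{\rm T}(\lambda x + \mu y)\v_1$, and since ${\rm T}$ is linear this expands to $\lambda(rx + s\,{\rm T}(x)\v_1) + \mu(ry + s\,{\rm T}(y)\v_1) = \lambda\,\Phi_{(r,s)}(x) + \mu\,\Phi_{(r,s)}(y)$. One should also note that $\Phi_{(r,s)}$ genuinely lands in $\L$: both $rx$ and $s\,{\rm T}(x)\v_1$ lie in $\L$ because $\L$ is closed under addition and integer scaling, ${\rm T}(x) \in \Z$, and $\v_1 \in \L$.

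Next I would establish the intertwining identity ${\rm T} \circ \Phi_{(r,s)} = [m] \circ {\rm T}$. For any $x \in \L$, apply ${\rm T}$ to $\Phi_{(r,s)}(x) = rx + s\,{\rm T}(x)\v_1$; using linearity of ${\rm T}$ and the fact that ${\rm T}(x)$ is an integer scalar that pulls out, this gives ${\rm T}(\Phi_{(r,s)}(x)) = r\,{\rm T}(x) + s\,{\rm T}(x)\,{\rm T}(\v_1) = (r + s\,{\rm T}(\v_1))\,{\rm T}(x) = m\,{\rm T}(x)$, which is precisely $([m]\circ{\rm T})(x)$.

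Finally, for injectivity when $r \neq 0$ and $m \neq 0$, suppose $\Phi_{(r,s)}(x) = \bo$, i.e., $rx = -s\,{\rm T}(x)\v_1$. Applying ${\rm T}$ and using the identity just proved yields $m\,{\rm T}(x) = {\rm T}(\Phi_{(r,s)}(x)) = 0$, so ${\rm T}(x) = 0$ since $m \neq 0$. Substituting back gives $rx = -s\cdot 0\cdot \v_1 = \bo$, hence $x = \bo$ since $r \neq 0$ and we are in a torsion-free (indeed real vector) setting. Thus $\ker\Phi_{(r,s)} = \{\bo\}$ and $\Phi_{(r,s)}$ is injective.

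None of these steps presents a real obstacle; the proposition is a bookkeeping lemma. The only point requiring a moment's care is making sure the codomain is correctly taken to be $\L$ (not merely $\R^N$), which relies on $\v_1 \in \L$ and the integrality of ${\rm T}$, and keeping track of the hypothesis $\v_1 \notin \ker{\rm T}$ is not actually needed for this particular statement (it will matter later when one computes $m$ and the index). I would present the argument compactly in the three displayed computations above.
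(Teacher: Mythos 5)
Your proof is correct and follows essentially the same route as the paper: direct verification of linearity, the computation ${\rm T}(\Phi_{(r,s)}(x)) = (r+s\,{\rm T}(\v_1)){\rm T}(x) = m\,{\rm T}(x)$, and injectivity by applying ${\rm T}$ to an element of $\ker\Phi_{(r,s)}$ to force ${\rm T}(x)=0$ and then $rx=\bo$. Your added remarks (that the image genuinely lies in $\L$, and that $\v_1\notin\ker{\rm T}$ is not needed here) are accurate but minor refinements of the same argument.
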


\begin{proof}
Since {\rm T} is linear, $\Phi_{(r,s)}$ is a composition of linear maps; hence it is linear as well. To verify the second claim let $x \in \L$. Then,  \[{\rm T}(\Phi_{(r,s)}(x))={\rm T}(rx +s{\rm T}(x)\v_{1})=r{\rm T}(x)+s{\rm T}(\v_{1}){\rm T}(x)= m {\rm T}(x).\] For the last claim let $x \in \ker({\rm T})$. Since $m \neq 0$ it follows from the second claim that ${\rm T}(x)=0$ thus, by the definition, $\Phi_{(r,s)}(x)=rx$. Since $\Phi_{(r,s)}(x)=0$, and $r\neq 0$, it follows that $x=0$. 

\end{proof}

\begin{corollary}\label{CoroA}
Let $\L \subseteq \R^N$ be a full lattice, let ${\rm T} \in {\rm Hom}_{\Z}(\L, \Z) \setminus \{0\} $ and let $\v_{1} \in \L \setminus \ker {\rm T}$. Let $r, s$ be integers and define $m:=r +s{\rm T}(\v_{1})$. Suppose that $r$ satisfies 
\begin{itemize}

\item $r \neq 0$

\item $|r| <  | {\rm T}(\v_{1}) |.$
\end{itemize}
Then $\Phi_{(r,s)}(\L)$ is a full sub-lattice of $\L$. Furthermore, \[ \Phi_{(r,s)}(\L) \subseteq \L^{(m)}_{{\rm T}}:= \{x \in \L : {\rm T}(x) \equiv 0 \pmod{mn_{T}}\}\] where $n_{T}$ is the size of the co-kernel of $T$, i.e., $[\Z: {\rm T}(\L)].$

\end{corollary}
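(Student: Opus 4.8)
The plan is to reduce everything to Proposition \ref{A}. First I would check that the hypotheses on $r$ force $m\neq 0$: if $m = r + s{\rm T}(\v_{1}) = 0$, then $s{\rm T}(\v_{1}) = -r$; since $r\neq 0$ we must have $s\neq 0$, whence $|r| = |s|\,|{\rm T}(\v_{1})| \geq |{\rm T}(\v_{1})|$, contradicting $|r| < |{\rm T}(\v_{1})|$. With $r\neq 0$ and $m\neq 0$ in hand, the last clause of Proposition \ref{A} gives that $\Phi_{(r,s)}\colon \L\to\L$ is an injective homomorphism of $\Z$-modules.

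Next I would argue that $\Phi_{(r,s)}(\L)$ is a full sub-lattice of $\L$. Since $\L$ is a free $\Z$-module of rank $N$ and $\Phi_{(r,s)}$ is injective and $\Z$-linear, the image $\Phi_{(r,s)}(\L)$ is a free $\Z$-module of rank $N$; being a subset of the discrete set $\L\subseteq\R^N$ it is a discrete subgroup, hence a lattice, and having rank $N$ it is full. (Equivalently, one extends $\Phi_{(r,s)}$ to the $\R$-linear endomorphism $x\mapsto rx + s{\rm T}(x)\v_{1}$ of $\R^N$; its kernel is an $\R$-subspace meeting the full lattice $\L$ trivially, because $\Phi_{(r,s)}$ is injective on $\L$, so the kernel is $0$ and $\Phi_{(r,s)}$ is an automorphism of $\R^N$, carrying the full lattice $\L$ onto the full lattice $\Phi_{(r,s)}(\L)\subseteq\L$.)

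Finally, for the containment $\Phi_{(r,s)}(\L)\subseteq \L^{(m)}_{{\rm T}}$: pick $y = \Phi_{(r,s)}(x)$ with $x\in\L$. By the identity ${\rm T}\circ\Phi_{(r,s)} = [m]\circ{\rm T}$ of Proposition \ref{A} we get ${\rm T}(y) = m\,{\rm T}(x)$. Since ${\rm T}(x)\in{\rm T}(\L) = n_{T}\Z$, write ${\rm T}(x) = n_{T}k$ with $k\in\Z$; then ${\rm T}(y) = m n_{T} k \equiv 0\pmod{m n_{T}}$, so $y\in\L^{(m)}_{{\rm T}}$, as required.

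I do not anticipate a serious obstacle here: the only genuinely new point beyond Proposition \ref{A} is the elementary observation that $|r| < |{\rm T}(\v_{1})|$ together with $r\neq 0$ rules out $m = 0$, which is exactly what licenses the injectivity clause of that proposition; everything else is bookkeeping with the relation ${\rm T}\circ\Phi_{(r,s)} = [m]\circ{\rm T}$ and the definition of $n_{T}$ as the index $[\Z : {\rm T}(\L)]$.
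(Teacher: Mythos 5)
Your proposal is correct and follows the same route as the paper, which simply invokes Proposition \ref{A}: you merely spell out the details the paper leaves implicit, namely that $r\neq 0$ together with $|r|<|{\rm T}(\v_{1})|$ forces $m\neq 0$, that injectivity preserves the $\Z$-rank, and that ${\rm T}\circ\Phi_{(r,s)}=[m]\circ{\rm T}$ combined with ${\rm T}(\L)=n_{T}\Z$ gives the inclusion. No discrepancies.
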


\begin{proof}

This is just a reformulation of Proposition \ref{A}. By hypothesis, we have that $r$ and $m$ are non-zero. From the proposition we see that $\Phi_{(r,s)}(\L)$ and $\L$ have the same $\Z$-rank. The claimed inclusion follows from  $T \circ \Phi_{(r,s)} = [m] \circ {\rm T}$.

\end{proof}

\begin{remark}\label{ConditionsOnr} By definition $m$ is defined in terms of $r,s$ and ${\rm T}(\v_{1})$. The conditions of $r$ defined in the corollary are there to guarantee the reverse;   in this case  we have that the lattice $\Phi_{(r,s)}(\L)$  is determined by $m$ and ${\rm T}(\v_{1})$ since $r$ is congruent to $m$ modulo   ${\rm T}(\v_{1})$.
\end{remark}

\begin{lemma}\label{CongruencemLattice}
Let $\L \subseteq \R^N$ be a full lattice, let ${\rm T} \in {\rm Hom}_{\Z}(\L, \Z) \setminus \{0\} $ and let $n_{T}$ be the order of the co-kernel of ${\rm T}$. For a given positive integer $m$ let $\L^{(m)}_{{\rm T}}:= \{x \in \L : {\rm T}(x) \equiv 0 \pmod{mn_{{\rm T}}}\}$. Then,  $\L^{(m)}_{{\rm T}}$ is a full sub-lattice of $\L$ of index $m$.
\end{lemma}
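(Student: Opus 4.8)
The plan is to show that $\L^{(m)}_{{\rm T}}$ is the kernel of a natural surjective homomorphism from $\L$ to a cyclic group of order $m$, which simultaneously establishes that it is a subgroup (hence a sublattice, being discrete), that it has index $m$, and that it is full-rank.

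First I would observe that since $n_{{\rm T}} = [\Z : {\rm T}(\L)]$ by definition, the image ${\rm T}(\L)$ is exactly the subgroup $n_{{\rm T}}\Z \subseteq \Z$. Composing ${\rm T}$ with the quotient map $\Z \to \Z/(mn_{{\rm T}})\Z$ gives a homomorphism $\psi : \L \to \Z/(mn_{{\rm T}})\Z$ whose image is $(n_{{\rm T}}\Z)/(mn_{{\rm T}}\Z)$, a cyclic group of order exactly $m$. By construction, $\ker \psi = \{x \in \L : {\rm T}(x) \equiv 0 \pmod{mn_{{\rm T}}}\} = \L^{(m)}_{{\rm T}}$. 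The first isomorphism theorem then yields $\L / \L^{(m)}_{{\rm T}} \cong (n_{{\rm T}}\Z)/(mn_{{\rm T}})\Z$, so $[\L : \L^{(m)}_{{\rm T}}] = m$.

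It remains to check that $\L^{(m)}_{{\rm T}}$ is a \emph{full} sublattice, i.e. has $\Z$-rank $N$. This follows because it contains a full-rank sublattice: for instance $mn_{{\rm T}}\cdot \L \subseteq \L^{(m)}_{{\rm T}}$, since ${\rm T}(mn_{{\rm T}} x) = mn_{{\rm T}}{\rm T}(x) \equiv 0 \pmod{mn_{{\rm T}}}$, and $mn_{{\rm T}}\L$ has rank $N$. Alternatively, one can invoke Corollary \ref{CoroA}: choosing $\v_1 \in \L \setminus \ker {\rm T}$ and suitable $r,s$ produces $\Phi_{(r,s)}(\L) \subseteq \L^{(m)}_{{\rm T}}$ of full rank. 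Being a subgroup of $\L$ (hence discrete) of full rank, $\L^{(m)}_{{\rm T}}$ is a full sublattice.

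I do not anticipate a serious obstacle here; the only point requiring mild care is keeping the two "moduli" straight — the factor $n_{{\rm T}}$ is precisely what makes the image of $\L$ under $\psi$ have order $m$ rather than something smaller, so the definition of $\L^{(m)}_{{\rm T}}$ with the $mn_{{\rm T}}$ congruence is calibrated exactly so that the index comes out to $m$. If ${\rm T}$ were surjective ($n_{{\rm T}} = 1$) this would just be the familiar statement that $\{x : {\rm T}(x) \equiv 0 \pmod m\}$ has index $m$; the general case is obtained by first reducing to the surjective one via the isomorphism ${\rm T}(\L) \cong \Z$.
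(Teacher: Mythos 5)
Your proposal is correct and follows essentially the same route as the paper: compose ${\rm T}$ with reduction modulo $mn_{{\rm T}}$, identify the image as $n_{{\rm T}}\Z/mn_{{\rm T}}\Z \cong \Z/m\Z$ and the kernel as $\L^{(m)}_{{\rm T}}$, and conclude the index is $m$. Your explicit check of full rank (via $mn_{{\rm T}}\L \subseteq \L^{(m)}_{{\rm T}}$) is a detail the paper leaves implicit, but it is the same argument in substance.
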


\begin{proof}
We claim that  $\L/\L^{(m)}_{{\rm T}} \cong \Z/m\Z$, from which the result follows. Composing ${\rm T}$ with the reduction map $\Z  \to \Z/mn_{{\rm T}}\Z$ we get a linear map $\L  \to \Z/mn_{{\rm T}}\Z $ with image  $n_{{\rm T}}\Z/mn_{{\rm T}}\Z  \cong  \Z/m\Z$ and with kernel equal to $\L^{(m)}_{{\rm T}}$, hence proving our claim.
\end{proof}

\begin{definition}\label{mainsublat} Let $\L, {\rm T }$, and $\v_{1}$ be as above. Let $r, s$ be integers. The lattice  $\L_{{\rm T},\v_1}^{(r,s)}$ is defined as the sub-lattice of $\L$ given by the image of $\Phi_{(r,s)}$ i.e., 
\[\L_{{\rm T},\v_1}^{(r,s)} := \Phi_{(r,s)}(\L).\]

\end{definition}

It follows from Corollary \ref{CoroA} and Lemma \ref{CongruencemLattice} that for $m=r+s{\rm T}(\v_{1})$ if $r\neq 0, |r| < |{\rm T}(\v_{1})|$,  we have a sequence of rank $N$ lattices \[ \L_{{\rm T},\v_1}^{(r,s)} \subseteq  \L^{(m)}_{{\rm T}} \subseteq \L \] so that $[\L:\L_{ {\rm T},\v_1}^{(r,s)}]$ is a multiple of $m$.  It turns out that such a multiple is $|r|^{N-1}.$

\begin{proposition}\label{Index}
Let $\L, {\rm T }$, and $\v_{1}$ be as above. Let $r, s$ be integers with $r\neq 0$ and $|r| < |{\rm T}(\v_{1})|$ and let $m:= r+s {\rm T}(\v_{1})$.
Then, \[[\L:\L_{ {\rm T},\v_1}^{(r,s)}]=m|r|^{N-1}.\] In particular, $\L_{{\rm T},\v_1}^{(r,s)} =  \L^{(m)}_{{\rm T}}$ if and only if $r=\pm 1$.
\end{proposition}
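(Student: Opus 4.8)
The plan is to compute the index $[\L : \L_{{\rm T},\v_1}^{(r,s)}]$ as the absolute value of the determinant of the linear map $\Phi_{(r,s)} : \R^N \to \R^N$, since $\L_{{\rm T},\v_1}^{(r,s)} = \Phi_{(r,s)}(\L)$ and $\Phi_{(r,s)}$ is an injection (Proposition \ref{A}), so $[\L : \Phi_{(r,s)}(\L)] = |\det \Phi_{(r,s)}|$. First I would extend ${\rm T}$ to a linear functional on $\R^N$ and write $\Phi_{(r,s)}(x) = rx + s\,{\rm T}(x)\,\v_1$, which is the rank-one perturbation of the scalar map $r\cdot \mathrm{Id}$ given by $r\,\mathrm{Id} + \v_1 \otimes (s\,{\rm T})$. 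By the matrix determinant lemma, $\det(r\,\mathrm{Id} + s\,\v_1 {\rm T}^{\mathsf t}) = r^{N-1}\bigl(r + s\,{\rm T}(\v_1)\bigr) = r^{N-1} m$. Taking absolute values gives $[\L : \L_{{\rm T},\v_1}^{(r,s)}] = |m|\,|r|^{N-1}$; since the hypotheses $r\neq 0$, $|r| < |{\rm T}(\v_1)|$ force $m = r + s{\rm T}(\v_1) \neq 0$ (as in Corollary \ref{CoroA}), this is a nonzero finite index, and one can drop the absolute value on $m$ if $m$ is taken positive as in the statement.

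An alternative, more self-contained route avoids the determinant lemma: decompose $\R^N = \R\v_1 \oplus W$ where $W = \ker {\rm T}\otimes\R$. On $W$ the map $\Phi_{(r,s)}$ acts as multiplication by $r$ (shown in the proof of Proposition \ref{A}), contributing a factor $|r|^{N-1}$ to the determinant; on the quotient $\R^N / W \cong \R$, the induced map is multiplication by $m = r + s{\rm T}(\v_1)$, since $\Phi_{(r,s)}(\v_1) = r\v_1 + s{\rm T}(\v_1)\v_1 = m\v_1$. Working with a basis adapted to a finite-index sublattice of $\L$ contained in $\R\v_1 \oplus (\L\cap W)$ and carefully tracking the two contributions yields the same product. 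I would probably present the determinant-lemma computation as the main argument and mention the block-triangular picture as motivation.

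For the final ``in particular'' claim: from $[\L : \L_{{\rm T},\v_1}^{(r,s)}] = m|r|^{N-1}$ and Lemma \ref{CongruencemLattice} giving $[\L : \L_{\rm T}^{(m)}] = m$, the inclusion $\L_{{\rm T},\v_1}^{(r,s)} \subseteq \L_{\rm T}^{(m)}$ (Corollary \ref{CoroA}) is an equality if and only if the indices agree, i.e. $m|r|^{N-1} = m$, i.e. $|r|^{N-1} = 1$; since $N \geq 2$ (the map ${\rm T}$ is nontrivial on a full lattice, and the inequality $|r| < |{\rm T}(\v_1)|$ is only possible with $N\geq 2$ in the relevant setting) this forces $|r| = 1$, i.e. $r = \pm 1$; conversely $r = \pm 1$ gives index exactly $m$, forcing equality.

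The main obstacle is purely bookkeeping: making the reduction ``$[\L : \Phi(\L)] = |\det\Phi|$'' rigorous when $\Phi$ is described on $\L$ rather than on a fixed $\Z$-basis, and correctly handling the fact that $\v_1$ need not be primitive in $\L$ and that ${\rm T}(\L)$ need not be all of $\Z$ — though in fact neither subtlety affects the determinant computation, since the index is the determinant of $\Phi$ expressed in \emph{any} $\Z$-basis of $\L$, and that determinant is a similarity invariant equal to $r^{N-1}m$ regardless of how $\v_1$ sits inside $\L$. I expect the write-up to be short once the matrix determinant lemma is invoked.
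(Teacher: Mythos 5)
Your proof is correct, but it takes a different route from the paper's. The paper also reduces the index to $|\det|$ of the matrix of $\Phi_{(r,s)}$ in a $\Z$-basis, but it spends most of its effort constructing a special \emph{rigid} basis $\{e_1,\dots,e_N\}$ of $\L$ with ${\rm T}(e_i)$ all equal (taking a basis $\{w_1,\dots,w_{N-1}\}$ of $\ker{\rm T}$ and a $v$ with ${\rm T}(v)$ generating ${\rm Im}({\rm T})$, then using $\{w_1+v,\dots,w_{N-1}+v,v\}$), writes out the full $N\times N$ matrix of $\Phi_{(r,s)}$ in that basis, observes every column sums to $m$, and gets $\det = mr^{N-1}$ by explicit row operations. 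You instead exploit basis-independence of the determinant and hit the rank-one perturbation $r\,\mathrm{Id}+s\,\v_1\otimes{\rm T}$ directly with the matrix determinant lemma (equivalently, your block picture: eigenvalue $r$ with multiplicity $N-1$ on $\ker{\rm T}\otimes\R$ and eigenvalue $m$ on $\R\v_1$, since $\Phi_{(r,s)}(\v_1)=m\v_1$). Your version is shorter, avoids the construction of the rigid basis entirely, and makes the spectral structure of $\Phi_{(r,s)}$ transparent; the paper's version is more elementary (no determinant lemma) and its rigid basis is in the same spirit as the Lagrangian bases used later. Two cosmetic points you already flag correctly: the index is really $|m|\,|r|^{N-1}$ (the paper, like you, implicitly takes $m>0$), and the ``in particular'' equivalence needs $N\ge 2$, which holds in the setting where the proposition is applied; your argument for that part via Lemma \ref{CongruencemLattice} and Corollary \ref{CoroA} matches what the paper leaves implicit.
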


\begin{proof}
First, suppose that there exists a basis  $\mathcal{B}=\{e_1,\dots,e_N\}$  of $\L$ that is rigid with respect to  ${\rm T}$ i.e., ${\rm T }(e_{i})={\rm T }(e_{j})$ for all  $1\leq  i,j \leq N$. Thanks to Proposition \ref{A} and Corollary \ref{CoroA} the set $\mathcal{B}_{\Phi_{(r,s)}}=\{\Phi_{(r,s)}(e_1),\dots,\Phi_{(r,s)}(e_N)\}$ is a basis for  $L_{{\rm T },\v_{1}}^{(r,s)}$. Hence, the index $[L:L_{{\rm T },\v_1}^{(r,s)}]$ is equal to $|\det(A)|$ where $A$ is the representation matrix of the basis  $\mathcal{B}_{\Phi_{(r,s)}}$ in terms of the basis  $\mathcal{B}$. If $\displaystyle \v_1=\sum_{j=1}^{N}a_j e_j$, then for all  $1 \leq i \leq N$, we have that \[\Phi_{(r,s)}(e_i) = re_i +s{\rm T}(e_i)\sum_{j=1}^{N}a_j e_j,\] hence \[
A = \begin{bmatrix} 
    r+sa_1{\rm T }(e_1) & s a_1{\rm T }(e_2) & \dots &  & & sa_1{\rm T }(e_N) \\
    sa_2{\rm T }( e_1) & r+sa_2{\rm T }( e_2) & \dots & & & sa_2{\rm T }(e_N) \\
    \vdots & &  \ddots &  & & \vdots  \\
        &  & &  & & sa_{N-1}{\rm T }(e_N) \\
      sa_N{\rm T }( e_1) &   s a_N{\rm T }(e_2)  & \dots  & &  & r+s a_N{\rm T }( e_N)
    \end{bmatrix}.
\] Since  $\mathcal{B}$ is rigid with respect to ${\rm T}$, the matrix $A$ is equal to  \[
A = \begin{bmatrix} 
    r+sa_1{\rm T }(e_1) & s a_1{\rm T }(e_1) & \dots &  & & sa_1{\rm T }(e_1) \\
    sa_2{\rm T }( e_2) & r+sa_2{\rm T }( e_2) & \dots & & & sa_2{\rm T }(e_2) \\
    \vdots & &  \ddots &  & & \vdots  \\
        &  & &  & & sa_{N-1}{\rm T }(e_{N-1}) \\
      sa_N{\rm T }( e_N) &   s a_N{\rm T }(e_N)  & \dots  & &  & r+s a_N{\rm T }( e_N)
    \end{bmatrix}.
\] Notice that the sum of the elements of an arbitrary column of $A$ is equal to \[r+s\sum_{i=1}^{N}a_{i}{\rm T}(e_{i})=r+s{\rm T}(\v_{1})=m.\] Therefore after adding all the rows of $A$, factorizing  $m$, and then substracting from the $i$-th row $sa_{i}{\rm T}(e_{i})$ times the first row, we see that $\det(A)=mr^{N-1}.$ To finish the proof we must prove that such a rigid basis actually exists. Since ${\rm T}:\L\rightarrow \Z$ is non-trivial there is $v \in \L$ such that ${\rm Im}({\rm T})$ is generated by ${\rm T}(v)$, moreover by the additivity of the rank, $\ker {\rm T}$ is a sublattice of $\L$ of rank $n-1$. Let $\{w_1,\dots, w_{n-1}\}$ be a basis for $\ker {\rm T}$. Notice that $\{w_1,\dots,w_{n-1}, v\}$ is a basis for $\L$. Thus,  $\mathcal{B}:=\{w_1+v,\dots,w_{n-1}+v, v\}$  is a basis for $\L$ and it is rigid with respect to ${\rm T}$ since ${\rm T}$ takes the value of ${\rm T}(v)$ at every element in the set.
\end{proof}

\begin{example}\label{CubicLatticeFirstEx}
Suppose $N \ge 2$ and let $\L =\Z^{N}$ be the standard cubic lattice. Let ${\rm T}: \L \to  \Z$ be the map that sends a vector to the sum of its coordinates. In this case the lattice $\L_{{\rm T}}^{(2)}$ is the lattice $\mathbb{D}_{N}$. 
\begin{enumerate}

\item Choosing $\v_{1}$ to be any vector with ${\rm T}(\v_{1})=3$, for instance $\v_{1}=[1,1,1,0...,0]^{t}$ if $N>2$ and  $\v_{1}=[1,2]^{t}$ for $N=2$, we obtain that  $\L_{ {\rm T},\v_1}^{(-1,1)}=\L_{{\rm T}}^{(2)}=\mathbb{D}_{N}$. This can be seen by calculating the image of the standard basis of $\Z^{N}$ under $\Phi_{(r,s)}$. Such image is the set \[\{[0,1,1,0,...,0]^t, [1,0,1,0,...,0]^t, [1,1,0,0,...,0]^t , [1,1,1,-1,...,0]^t,...,[1,1,1,0,...0,-1]^t \}\] for $N >2$ and $\{[0,2]^{t}, [1,1]^{t}\}$ for $N=2$. In either case such set is a basis of $\mathbb{D}_{N}$.

\item Suppose that $N>2$. Choosing  $\v_{1}$ as the vector $[1,...,1]^{t}$, we see that for $m=2=r+sN$  the tuple $(r,s)$ is either $(2,0)$ or $(2-N, 1)$ which define the lattices   $\L_{ {\rm T},\v_1}^{(2,0)}$ and $\L_{ {\rm T},\v_1}^{(2-N,1)}$ respectively. In the former case, the lattice $\L_{ {\rm T},\v_1}^{(2,0)}$ is $(2\Z)^{N}$. In the latter, the structure of the lattice $\L_{ {\rm T},\v_1}^{(2-N,1)}$ varies with $N$. Similar to the previous item a basis for $\L_{ {\rm T},\v_1}^{(2-N,1)}$  can be calculated as the image of the standard basis of $\Z^{N}$ under $\Phi_{(r,s)}$ to obtain the basis  \[\{[3-N,1,1,1,...,1]^t, [1,3-N,1,1,...,1]^t, ...,[1,1,...,1,3-N]^t \}.\] 

 For instance:
\begin{itemize}

 \item If $N=3 $ the lattice $\L_{ {\rm T},\v_1}^{(-1,1)}$ is isomorphic to the root lattice $\mathbb{A}_{3}$. 
 
 \item If $N=4$ the lattice $\L_{ {\rm T},\v_1}^{(-2,1)}$ is isomorphic to the lattice $(2\Z)^{4}$ .
 
 \item If $N=5$ the lattice $\L_{ {\rm T},\v_1}^{(-3,1)}$ is isomorphic to the lattice $L_{9,5}$ defined in \cite[Theorem 4.1]{BaNe}.
 
 \end{itemize}

\item For $N=2$ and $\v_{1}=[1,1]^t$ the lattices $\L_{ {\rm T},\v_1}^{(-1,2)} , \L_{ {\rm T},\v_1}^{(1,1)}$, and $\L_{{\rm T}}^{(3)}$ are all isomorphic to $\mathbb{A}_{2}$.  

\end{enumerate}
\end{example}

\begin{lemma}\label{Cong1}
Let $N \ge 2$ be an integer and let $K$ be a degree $N$ number field. Let $m$ be an integer such that  $m\equiv \pm 1 \pmod{N}$. By the Minkowski embedding we can view $\L:=O_{K}$ as a lattice in $\R^{N}$, so for this lattice let  ${\rm T}: \L \to \Z$ be the trace map. Suppose that $K$ is tame, i.e., that there is no prime that ramifies wildly in $K$. Then,  \[\{x \in O_{K}: {\rm Tr}_{K/\Q}(x) \equiv 0\pmod{m}\}=  \L_{T,1}^{\pm 1, s} \] where $s=\frac{m \mp 1}{N}.$
\end{lemma}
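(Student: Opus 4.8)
The plan is to show the two lattices
\[
\L^{(m)}_{\rm T}=\{x\in O_K:{\rm Tr}_{K/\Q}(x)\equiv 0\pmod m\}
\quad\text{and}\quad
\L_{T,1}^{(\pm1,s)}=\Phi_{(\pm1,s)}(O_K)
\]
coincide, where $s=\tfrac{m\mp1}{N}$. First I would record the value ${\rm T}(\v_1)$ for $\v_1=1$: since $K$ is tame, $O_K$ admits a Lagrangian (in particular, rigid-with-respect-to-trace) integral basis $\{e_1,\dots,e_N\}$ with ${\rm Tr}_{K/\Q}(e_i)=1$ and $e_1+\dots+e_N=1$; hence ${\rm T}(1)={\rm Tr}_{K/\Q}(1)=N$, and moreover the trace map is surjective onto $\Z$ because ${\rm Tr}_{K/\Q}(e_i)=1$, so $n_{\rm T}=1$. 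Then $m=r+s\,{\rm T}(\v_1)=\pm1+sN$, which is consistent with $s=\tfrac{m\mp1}{N}$ and forces $r=\pm1$; in particular $r\neq0$ and $|r|=1<N=|{\rm T}(\v_1)|$, so the hypotheses of Corollary~\ref{CoroA} and Proposition~\ref{Index} are all satisfied.

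Next I would combine the ingredients already proved. Lemma~\ref{CongruencemLattice} (with $n_{\rm T}=1$) tells us $\L^{(m)}_{\rm T}$ is a full sub-lattice of $O_K$ of index $|m|$; note $\L^{(m)}_{\rm T}$ as written, $\{x:{\rm Tr}(x)\equiv0\pmod m\}$, is literally the lattice denoted $\L^{(|m|)}_{\rm T}$ in that lemma since the congruence only depends on $|m|$. Corollary~\ref{CoroA} gives the inclusion $\L_{T,1}^{(\pm1,s)}=\Phi_{(\pm1,s)}(O_K)\subseteq \L^{(m)}_{\rm T}$. Finally Proposition~\ref{Index} computes $[O_K:\L_{T,1}^{(\pm1,s)}]=|m|\cdot|r|^{N-1}=|m|\cdot1=|m|$. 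Since the sub-lattice $\L_{T,1}^{(\pm1,s)}$ of finite index $|m|$ sits inside the sub-lattice $\L^{(m)}_{\rm T}$ of the same index $|m|$ in $O_K$, the two must be equal. This is exactly the ``if and only if'' clause of Proposition~\ref{Index} ($r=\pm1\iff\L_{{\rm T},\v_1}^{(r,s)}=\L^{(m)}_{\rm T}$) applied in the direction $r=\pm1\Rightarrow$ equality.

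The only genuine point requiring care — and the step I expect to be the main obstacle — is justifying that a tame degree-$N$ field $K$ has ${\rm Tr}_{K/\Q}$ surjective onto $\Z$, equivalently that $O_K$ possesses an element (indeed a basis) of trace $1$, so that $n_{\rm T}=1$ and the Conner–Perlis/Lagrangian machinery of the introduction applies with $\v_1=1$ having ${\rm T}(\v_1)=N$ exactly. This is where the tameness hypothesis is used: for a tame extension the different is coprime to the relevant primes and the trace form is non-degenerate mod those primes, which yields surjectivity of the trace; alternatively one invokes directly the existence of a Lagrangian basis guaranteed in the tame case (as recalled in \S\ref{TameLattices}), from which ${\rm T}(1)=\sum{\rm T}(e_i)=N$ and surjectivity are immediate. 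Once $n_{\rm T}=1$ and ${\rm T}(1)=N$ are in hand, everything else is a formal consequence of Corollary~\ref{CoroA}, Lemma~\ref{CongruencemLattice}, and Proposition~\ref{Index}, with the two sign cases $m\equiv 1$ and $m\equiv-1\pmod N$ handled uniformly by the $\pm$ notation. I would also make the trivial remark that if $m<0$ one may replace $m$ by $|m|$ throughout, since neither lattice changes, so there is no loss in assuming $m>0$.
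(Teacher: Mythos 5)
Your argument is correct and is essentially the paper's own proof: the paper simply observes that tameness makes ${\rm Tr}_{K/\Q}:O_K\to\Z$ surjective (citing Narkiewicz, Corollary 5 to Theorem 4.24), so $n_{\rm T}=1$ and ${\rm T}(1)=N$, and then concludes via Lemma \ref{CongruencemLattice} and Proposition \ref{Index} exactly as you do — containment from Corollary \ref{CoroA} plus equal index $|m|$, i.e.\ the $r=\pm1$ case of Proposition \ref{Index}. The only thing to drop is your opening claim that tameness alone furnishes a Lagrangian (trace-one) integral basis — this is not known for an arbitrary tame degree-$N$ field and is never needed, since ${\rm T}(1)={\rm Tr}_{K/\Q}(1)=N$ holds trivially and surjectivity of the trace is precisely the standard tameness fact you give as your alternative justification.
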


\begin{proof} Since $K$ is tame the trace map ${\rm T}: \L \to \Z$ is surjective, see \cite[Corollary 5 to Theorem 4.24]{narkiewicz2004elementary}. Hence the result follows from Proposition \ref{Index} and Lemma \ref{CongruencemLattice}.

\end{proof}

\subsection{Construction }In this section we specialize the above construction to lattices that have properties that are motivated by the structure of the ring of integers of certain tame abelian totally real number fields.  Let $\langle \cdot , \cdot \rangle$ be the standard inner product in $\R^{N}$. Suppose that $\L$ is a rank $N$ lattice such that  $\L \cap \L^{*} \neq 0$. This, for instance, can be achieved if $\L$ is integral. By definition, any non-zero $\v_{1} \in \L \cap \L^{*} $ defines an element  ${\rm T}_{\v_{1}} \in {\rm Hom}_{\Z}(\L, \Z) \setminus \{0\} $; namely \[{\rm T}_{\v_{1}} (x)=\langle x, \v_{1} \rangle.\]

 \begin{definition} Let $\L$ and $\v_{1}$ as above. Let $r, s$ be integers with  $0 \neq |r| < {\rm T}(\v_{1})= \| \v_{1} \|^2$.  The lattice  $\L_{\v_1}^{(r,s)}$ is defined as \[\L_{\v_1}^{(r,s)}:=\L_{ {\rm T}_{\v_{1}},\v_1}^{(r,s)}.\]
\end{definition}

From now on $\v_{1}$ will be a fixed non-zero element in  $\L \cap \L^{*}$ and, unless clarification is necessary, we will denote the map  ${\rm T}_{\v_{1}}$  by ${\rm T}$.

\begin{proposition}\label{prop1}
 Let $\L$, $\v_{1}$, $r$, and $s$ be as above. Let $m:=r+s {\rm T}(\v_{1})=r+s \|\v_{1}\|^{2}.$  For all $\alpha \in \L$ we have

\[||\Phi_{(r,s)}(\alpha)||^2 = A ||\alpha||^2 + B {\rm T}^2(\alpha),\]
where $\displaystyle A=r^2$ and $\displaystyle B=\frac{m^2-r^2}{\| \v_{1}\|^2}$.
\end{proposition}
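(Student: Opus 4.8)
The plan is to expand $\|\Phi_{(r,s)}(\alpha)\|^2$ directly using the definition $\Phi_{(r,s)}(\alpha) = r\alpha + s{\rm T}(\alpha)\v_1$ and bilinearity of the inner product. Writing everything out,
\[
\|\Phi_{(r,s)}(\alpha)\|^2 = \langle r\alpha + s{\rm T}(\alpha)\v_1,\, r\alpha + s{\rm T}(\alpha)\v_1\rangle = r^2\|\alpha\|^2 + 2rs{\rm T}(\alpha)\langle \alpha, \v_1\rangle + s^2{\rm T}(\alpha)^2\|\v_1\|^2.
\]
The crucial observation is that $\langle \alpha, \v_1\rangle = {\rm T}(\alpha)$ by the very definition of ${\rm T} = {\rm T}_{\v_1}$, so the middle term becomes $2rs{\rm T}(\alpha)^2$ and we get
\[
\|\Phi_{(r,s)}(\alpha)\|^2 = r^2\|\alpha\|^2 + \left(2rs + s^2\|\v_1\|^2\right){\rm T}(\alpha)^2.
\]
So $A = r^2$ immediately, and it remains to identify the coefficient of ${\rm T}(\alpha)^2$ with $B = \dfrac{m^2 - r^2}{\|\v_1\|^2}$.

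This last identification is a short algebraic manipulation: with $m = r + s\|\v_1\|^2$ we compute $m^2 - r^2 = (m-r)(m+r) = s\|\v_1\|^2\,(2r + s\|\v_1\|^2)$, hence $\dfrac{m^2 - r^2}{\|\v_1\|^2} = s(2r + s\|\v_1\|^2) = 2rs + s^2\|\v_1\|^2$, which is exactly the coefficient obtained above. I would present this as the final line of the computation. Note this step is where the hypothesis $\|\v_1\|^2 \neq 0$ is silently used so that division is legitimate; since $\v_1$ is a nonzero lattice vector in Euclidean space this is automatic, but it is worth a passing remark.

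There is really no serious obstacle here — the proof is a one-paragraph bilinear expansion — so the main thing to be careful about is the identity $\langle \alpha, \v_1\rangle = {\rm T}(\alpha)$, which is the only place the specific structure of ${\rm T}_{\v_1}$ (as opposed to an arbitrary linear functional) enters. If one tried to run the same computation with a general ${\rm T}$ the cross term would be $2rs\,{\rm T}(\alpha)\langle\alpha,\v_1\rangle$, which does not collapse into a multiple of ${\rm T}(\alpha)^2$, and the clean formula would fail. I would therefore state explicitly at the start that ${\rm T}(\alpha) = {\rm T}_{\v_1}(\alpha) = \langle \alpha, \v_1\rangle$, carry out the expansion, and close with the factorization of $m^2 - r^2$.
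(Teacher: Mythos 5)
Your proposal is correct and follows essentially the same route as the paper's proof: a direct bilinear expansion of $\|r\alpha + s{\rm T}(\alpha)\v_1\|^2$, collapsing the cross term via $\langle\alpha,\v_1\rangle={\rm T}(\alpha)$, and then identifying $2rs+s^2\|\v_1\|^2$ with $\frac{m^2-r^2}{\|\v_1\|^2}$ (the paper passes through the intermediate form $s(r+m)$, you factor $m^2-r^2$ directly, which is the same algebra).
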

\begin{proof} Let $\alpha \in \L$ and recall that by definition $\Phi_{(r,s)}(\alpha) =r\alpha +s{\rm T}(\alpha) \v_{1}$.  Then, 
\begin{equation*} 
\begin{split}
||\Phi_{(r,s)}(\alpha)||^2 & = \left \langle \Phi_{(r,s)}(\alpha),\Phi_{(r,s)}(\alpha) \right\rangle \\
 & = \left \langle r\alpha +s{\rm T}(\alpha) \v_{1} ,  r\alpha +s{\rm T}(\alpha) \v_{1} \right\rangle \\ 
 & = r^2||\alpha||^2 + 2rs{\rm T}(\alpha) \langle \alpha, \v_{1} \rangle + s^2{\rm T}(\alpha)^{2}  \|\v_{1}\|^{2} \\
 & = r^2||\alpha||^2 + 2rs{\rm T}(\alpha)^{2}  + s^2{\rm T}(\alpha)^{2}  \|\v_{1}\|^{2} \\
 & = r^2||\alpha||^2 + \left(2rs+s^2\|\v_{1}\|^{2}\right){\rm T}(\alpha)^2\\
 & = r^2||\alpha||^2 + s(r+m){\rm T}(\alpha)^2\\
 &= r^2||\alpha||^2 +  \frac{(m^2-r^2)}{\|\v_{1}\|^{2}}{\rm T}(\alpha)^2
\end{split}
\end{equation*}
\end{proof}

\subsection{Tame lattices}\label{TameLattices}

In this section we define the notion of tame lattice, then we use this definition to construct lattices with a minimal basis. 
\begin{example}
Suppose $N \ge 2$ and let $\L =\Z^{N}$ be the standard cubic lattice. Let $\v_{1}:=[1,...,1]^{t}$. For this choice of $\v_{1}$ the map ${\rm T}$ is equal to the sum of entries of a vector in $\Z^{N}$.  As we have seen in example \ref{CubicLatticeFirstEx}, for any pair of integers $(r,s)$ with $0< |r| <N$ the lattice $\L_{\v_1}^{(r,s)}$ can be very interesting and diverse. For instance, for $N>2$, by just picking $r \equiv 2 \pmod N$ we obtained $\mathbb{D}_{N}$, $\mathbb{A}_{2}$, $(2\Z)^{4}$, and $L_{9,5}$ (see \cite[Theorem 4.1]{BaNe} for its definition).
\end{example}

Another important feature of these examples is that all of them are strongly well-rounded. 

\begin{definition}\label{Lagrangian} Let $N$ be a positive integer and let $\L$ be a rank $N$ lattice. We say that $\L$ is tame if 
there is a basis $\{e_{1},...,e_{N}\}$ of $\L$ and a non-zero $\v_{1} \in \L \cap \L^{*}$ such that 

\begin{enumerate}

\item $e_{1}+...+e_{N}=\v_{1}.$

\item ${\rm T}_{\v_{1}}(e_{i})=\langle e_{1}, \v_{1} \rangle = 1$ for all $1 \leq i \leq N$.

\item $\langle e_{i}, e_{i} \rangle = \langle e_{j}, e_{j} \rangle$ for all $1 \leq i, j \leq N$.

\item $ \langle e_{i}, e_{j} \rangle = \langle e_{k}, e_{l} \rangle$ for all $1 \leq i, j, k,l \leq N$ with $i\neq j$ and $k \neq l$.

\end{enumerate}

In such a case, the basis $\{e_{1},...,e_{N}\}$ is called a Lagrangian basis.
\end{definition}

\begin{remark}
 Sets of vectors $v_1,\dots,v_k$ in $\R^N$ satisfying $\langle v_{i}, v_{i}\rangle=1$ and $\alpha=|\langle v_{i}, v_{j}\rangle|$ for $i\neq j$, and some $\alpha\in \R$, are sometimes called equiangular unit frames (see \cite{sustik2007existence}).
 \end{remark}

\begin{lemma}\label{NormaV1}
 Let $\L$ be a rank $N$ tame lattice and let $\v_{1}$ be the vector defining the map ${\rm T}$. Then, \[ \|\v_{1}\|^{2}={\rm T}(\v_{1})=N.  \]
\end{lemma}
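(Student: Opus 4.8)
The plan is to compute $\|\v_1\|^2$ and ${\rm T}(\v_1)=\langle \v_1,\v_1\rangle$ directly from the defining properties of a Lagrangian basis. Since $\v_1 = e_1+\dots+e_N$ by property (1), and ${\rm T}_{\v_1}(x)=\langle x,\v_1\rangle$ is linear, I would first observe that
\[
{\rm T}(\v_1) = \langle \v_1,\v_1\rangle = \sum_{i=1}^N \langle e_i,\v_1\rangle = \sum_{i=1}^N {\rm T}_{\v_1}(e_i).
\]
By property (2), each summand $\langle e_i,\v_1\rangle$ equals $1$, so the right-hand side is $N$. This immediately gives ${\rm T}(\v_1)=\|\v_1\|^2 = N$, which is the whole statement.

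So in fact the proof is essentially a one-line unfolding of definitions: expand $\v_1$ using (1), use bilinearity of the inner product, and apply (2) to each of the $N$ terms. I would present it exactly in that order. The only mild subtlety worth a remark is that $\|\v_1\|^2$ and ${\rm T}(\v_1)$ are literally the same quantity here because ${\rm T}={\rm T}_{\v_1}$ is defined by pairing against $\v_1$; so the equality $\|\v_1\|^2 = {\rm T}(\v_1)$ requires no argument at all, and only the value $N$ needs justification.

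There is no real obstacle. One could alternatively use the Gram matrix description from the introduction — rows summing appropriately with $a - h(N-1) = 1$ — to cross-check: $\langle e_i, \v_1\rangle = \langle e_i, e_1+\dots+e_N\rangle = a - (N-1)h = 1$, consistent with property (2), and then $\|\v_1\|^2 = \sum_{i,j}\langle e_i,e_j\rangle = N(a-(N-1)h) = N$. But the cleanest route is simply the linear-functional computation above, and that is what I would write.

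\begin{proof}
By property (1) of Definition \ref{Lagrangian} we have $\v_{1}=e_{1}+\dots+e_{N}$. Since ${\rm T}={\rm T}_{\v_{1}}$ is the linear map $x\mapsto \langle x,\v_{1}\rangle$, in particular ${\rm T}(\v_{1})=\langle \v_{1},\v_{1}\rangle=\|\v_{1}\|^{2}$. Using linearity and then property (2),
\[
\|\v_{1}\|^{2}={\rm T}(\v_{1})={\rm T}\Big(\sum_{i=1}^{N}e_{i}\Big)=\sum_{i=1}^{N}{\rm T}_{\v_{1}}(e_{i})=\sum_{i=1}^{N}1=N,
\]
which proves the claim.
\end{proof}
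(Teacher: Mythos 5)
Your proof is correct and follows exactly the paper's argument: note $\|\v_{1}\|^{2}=\langle\v_{1},\v_{1}\rangle={\rm T}(\v_{1})$ by the definition of ${\rm T}_{\v_{1}}$, then expand $\v_{1}=e_{1}+\dots+e_{N}$ and apply linearity together with property (2) to get the value $N$. No differences worth noting.
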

\begin{proof}
By definition of {\rm T}, we have that$ \|\v_{1}\|^{2} =\langle \v_{1}, \v_{1} \rangle= {\rm T}(\v_{1})$.  On the other hand let $\{e_{1},...,e_{N}\}$ be a basis of $\L$ satisfying the conditions of Definition \ref{Lagrangian}.  Then, ${\rm T}(\v_{1})={\rm T}(e_{1}+...+e_{N})={\rm T}(e_{1})+...+{\rm T}(e_{N})=1+...+1=N.$
\end{proof}

\begin{proposition}\label{prop2} Let $\L$ be a rank $N$ tame lattice with $\v_{1} \in \L$ and $\{e_{1},...,e_{N}\}$ satisfying the conditions of Definition \ref{Lagrangian}. Let $a:=\langle e_{1}, e_{1}\rangle$ and $h=-\langle e_{1}, e_{2}\rangle.$  Let $r, s$ be integers such that  $0 \neq |r| <N$ and let $m:=r+sN.$

\begin{itemize}
    \item $\displaystyle \L_{\v_1}^{(r,s)}$ is a sub-lattice of $\L$ of index $m|r|^{N-1}$.
    \item $\displaystyle ||\Phi_{(r,s)}(e_i)||^2 = ar^2 +\frac{m^2-r^2}{N}$ for all $1  \leq i \leq N$.
    \item $\displaystyle \langle \Phi_{(r,s)}(e_i),\Phi_{(r,s)}(e_j) \rangle= -r^2 h +\frac{m^2-r^2}{N}$ for all $1  \leq i \neq j\leq N$.
\end{itemize}

\end{proposition}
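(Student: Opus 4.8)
The plan is to verify the three bullet points of Proposition~\ref{prop2} by specializing the general results already established in the excerpt to the tame setting. The key observation is that, by Lemma~\ref{NormaV1}, for a tame lattice we have ${\rm T}(\v_{1})=\|\v_{1}\|^{2}=N$, so that $m=r+s{\rm T}(\v_{1})=r+sN$ is consistent with the earlier notation, and the hypothesis $0\neq|r|<N$ is exactly the condition $0\neq|r|<|{\rm T}(\v_{1})|$ needed to invoke Corollary~\ref{CoroA}, Proposition~\ref{Index}, and Proposition~\ref{prop1}.

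\textbf{First bullet.} This is immediate from Proposition~\ref{Index} applied to ${\rm T}={\rm T}_{\v_{1}}$: since $|r|<N=|{\rm T}(\v_{1})|$ and $r\neq 0$, we get $[\L:\L_{\v_1}^{(r,s)}]=[\L:\L_{{\rm T}_{\v_1},\v_1}^{(r,s)}]=m|r|^{N-1}$, using that $\L_{\v_1}^{(r,s)}$ is by definition $\L_{{\rm T}_{\v_1},\v_1}^{(r,s)}$. One should just note that the Lagrangian basis $\{e_1,\dots,e_N\}$ is already rigid with respect to ${\rm T}$ by condition~(2) of Definition~\ref{Lagrangian}, so the existence-of-rigid-basis part of Proposition~\ref{Index} is automatic here.

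\textbf{Second bullet.} Apply Proposition~\ref{prop1} with $\alpha=e_i$. It gives $\|\Phi_{(r,s)}(e_i)\|^2=A\|e_i\|^2+B\,{\rm T}(e_i)^2$ with $A=r^2$ and $B=\frac{m^2-r^2}{\|\v_1\|^2}=\frac{m^2-r^2}{N}$ (again using Lemma~\ref{NormaV1}). Now $\|e_i\|^2=a$ by the definition of $a$ and condition~(3), and ${\rm T}(e_i)={\rm T}_{\v_1}(e_i)=1$ by condition~(2). Substituting yields $\|\Phi_{(r,s)}(e_i)\|^2=ar^2+\frac{m^2-r^2}{N}$, uniformly in $i$.

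\textbf{Third bullet.} This is the only computation not already packaged in a prior statement, so it is the main (mild) obstacle. I would expand directly:
\[
\langle \Phi_{(r,s)}(e_i),\Phi_{(r,s)}(e_j)\rangle
=\langle re_i+s{\rm T}(e_i)\v_1,\ re_j+s{\rm T}(e_j)\v_1\rangle,
\]
and use ${\rm T}(e_i)={\rm T}(e_j)=1$ to get
\[
=r^2\langle e_i,e_j\rangle+rs\langle e_i,\v_1\rangle+rs\langle e_j,\v_1\rangle+s^2\|\v_1\|^2.
\]
Since $\langle e_i,\v_1\rangle={\rm T}(e_i)=1$, $\langle e_j,\v_1\rangle=1$, $\|\v_1\|^2=N$, and $\langle e_i,e_j\rangle=-h$ for $i\neq j$ by condition~(4) and the definition of $h$, this becomes $-r^2h+2rs+s^2N=-r^2h+s(r+m)$. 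Finally $s(r+m)=\frac{(m-r)(m+r)}{N}=\frac{m^2-r^2}{N}$ because $sN=m-r$, giving $-r^2h+\frac{m^2-r^2}{N}$ as claimed. The argument is entirely a matter of bookkeeping; the only thing to be careful about is consistently using Lemma~\ref{NormaV1} to replace $\|\v_1\|^2$ and ${\rm T}(\v_1)$ by $N$, and conditions~(2)--(4) of Definition~\ref{Lagrangian} to evaluate the various inner products. I would also remark that the same computation mirrors the algebra in the proof of Proposition~\ref{prop1}, so one could alternatively phrase it as a ``polarized'' version of that identity.
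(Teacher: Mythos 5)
Your proposal is correct and follows essentially the same route as the paper: the first two bullets are deduced from Proposition~\ref{Index}, Proposition~\ref{prop1} and Lemma~\ref{NormaV1}, and the third is the same direct expansion $\langle re_i+s\v_1,\ re_j+s\v_1\rangle=r^2\langle e_i,e_j\rangle+2rs+s^2N=-r^2h+\frac{m^2-r^2}{N}$ that the paper carries out. Your added remarks (that the Lagrangian basis is already rigid, and that $sN=m-r$ justifies $2rs+s^2N=\frac{m^2-r^2}{N}$) are accurate fillings-in of details the paper leaves implicit.
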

 
 \begin{proof}
 The first two conditions follow from  Propositions \ref{Index},  \ref{prop1} and Lemma \ref{NormaV1}. To finish the proof we take  $i\neq j$. Then, \[ \langle \Phi_{(r,s)}(e_i),\Phi_{(r,s)}(e_j) \rangle= \langle re_{i} +s \v_{1} ,  re_{j} +s\v_{1} \rangle=  r^2\langle e_{i}  ,  e_{j} \rangle +2rs+s^2N=-r^2 h +\frac{m^2-r^2}{N}.\]
 \end{proof}

 \begin{definition}
  Let $\L$ be a rank $N$ lattice and let  ${\rm T}: \L \to \Z$ be a non-trivial linear map. We denote by $\L_{\rm T}^{0}$ the rank $N-1$ lattice given by de Kernel of ${\rm T}$;   $\L_{\rm T}^{0}:=\ker({\rm T})$. We  will denote this sub-lattice by $\L^{0}$ since in general ${\rm T}$ will be clear from the context.
 \end{definition}
 
 \begin{example}
 Taking $\L=\Z^{n+1}$ the usual cubic lattice and ${\rm T}$ the sum of the coordinates function, then $\L^{0}$ is the root lattice $\mathbb{A}_{n}$. 
 \end{example}
 
 The above example is just a particular case of what happens in general tame lattices.
 
\begin{theorem}\label{An}
Let $\L$ be a rank $N$ tame lattice with $\v_{1} \in \L$ and $\{e_{1},...,e_{N}\}$ satisfying the conditions of Definition \ref{Lagrangian}. Let $a:=\langle e_{1}, e_{1}\rangle$ and $h=-\langle e_{1}, e_{2}\rangle.$   Then, \[\L^{0} \cong (a+h)\mathbb{A}_{N-1}.\]
\end{theorem}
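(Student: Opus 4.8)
The plan is to exhibit an explicit lattice isomorphism from $\L^{0}$ onto $(a+h)\mathbb{A}_{N-1}$ by writing down a convenient generating set for $\ker({\rm T})$ and computing its Gram matrix. First I would observe that, since ${\rm T}(e_i)=1$ for every $i$ (condition (2) of Definition \ref{Lagrangian}), the vectors $f_i:=e_i-e_N$ for $1\le i\le N-1$ all lie in $\L^{0}$, and they are $\R$-linearly independent; moreover they form a $\Z$-basis of $\L^{0}$, because if $x=\sum_{i=1}^{N}c_i e_i\in\L$ satisfies ${\rm T}(x)=\sum c_i=0$ then $x=\sum_{i=1}^{N-1}c_i f_i$. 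So $\{f_1,\dots,f_{N-1}\}$ is a basis of $\L^{0}$.

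Next I would compute the inner products $\langle f_i,f_j\rangle$ using $a=\langle e_i,e_i\rangle$ and $-h=\langle e_i,e_j\rangle$ for $i\ne j$ (conditions (3), (4)). For $i\ne j$ one gets $\langle f_i,f_j\rangle=\langle e_i-e_N,e_j-e_N\rangle = \langle e_i,e_j\rangle-\langle e_i,e_N\rangle-\langle e_N,e_j\rangle+\langle e_N,e_N\rangle = -h+h+h+a = a+h$, and for $i=j$, $\langle f_i,f_i\rangle = a -2(-h)+a = 2(a+h)$. Hence the Gram matrix of $\{f_1,\dots,f_{N-1}\}$ is $(a+h)$ times the matrix with $2$'s on the diagonal and $1$'s off the diagonal, which is precisely $(a+h)$ times the standard Gram matrix of $\mathbb{A}_{N-1}$ in the basis $\{\epsilon_i-\epsilon_{N}\}_{i=1}^{N-1}$ of $\mathbb{A}_{N-1}\subseteq\Z^{N}$ (here $\epsilon_1,\dots,\epsilon_N$ is the standard basis of $\Z^N$). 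Since the Gram matrix of $(a+h)\mathbb{A}_{N-1}$ is $(a+h)^2$ times that of $\mathbb{A}_{N-1}$, I will be careful about scaling: writing $(a+h)\mathbb{A}_{N-1}$ means dilating $\mathbb{A}_{N-1}$ by the scalar $\sqrt{a+h}$ — equivalently, the lattice with Gram matrix $(a+h)$ times that of $\mathbb{A}_{N-1}$ — so the match is exact. (If instead the notation means dilation by $a+h$, one replaces $a+h$ by $\sqrt{a+h}$ in the scalar; I will state the convention explicitly and adjust accordingly.) A matching of Gram matrices in a chosen basis gives an isometry of the two lattices, hence the claimed isomorphism.

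The one genuine point to check — and the main potential obstacle — is the integrality/positivity needed for the statement to even make sense, namely that $a+h$ is a positive integer (or at least a positive rational with $\sqrt{a+h}$ giving a genuine similarity class); this follows from tameness since $\L$ being tame forces $a-h(N-1)=1$ and $\L$ integral, so $a,h\in\Z$ and, $\L^{0}$ being a nonzero positive-definite lattice, $a+h>0$. I would also note that the isomorphism constructed is canonical enough that it identifies the basis $\{f_i\}$ with the standard minimal-vector basis of $\mathbb{A}_{N-1}$, which is the form in which the result gets used later.
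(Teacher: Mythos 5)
Your proof is correct and follows essentially the same route as the paper's: exhibit an explicit difference basis of $\ker({\rm T})$, compute its Gram matrix from conditions (3) and (4), and recognize it as $(a+h)$ times a Gram matrix of $\mathbb{A}_{N-1}$. The only deviation is cosmetic — the paper uses the consecutive differences $e_i-e_{i+1}$ (giving the tridiagonal Gram matrix) while you use $e_i-e_N$ (giving the all-ones off-diagonal one), and your one-line verification that these form a $\Z$-basis of $\L^{0}$ is in fact a bit cleaner than the paper's linear-system argument; your remarks on the scaling convention and on $a+h>0$ are consistent with how the paper uses the result.
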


\begin{proof}
Let $w_{1}:=e_{1}-e_{2}, w_{1}:=e_{2}-e_{3},...,w_{N-1}:=e_{N-1}-e_{N}$. Since $\rm T$ is constant on the $e_{i}'s$ then $w_{i} \in \L^{0}$ for all $i$. We claim that $\{w_{1},...,w_{N-1}\}$ is a basis of  $\L^{0}$. They are clearly linearly independent, so it's enough to show that ${\rm spam}_{\Z} \{w_{1},...,w_{N-1}\}= L^{0}$. Let $v =a_{1}e_{1}+...+a_{N}e_{N} \in L^{0}.$ Then $a_{1}+...+a_{N}=0$. To show that $v \in {\rm spam}_{\Z} \{w_{1},...,w_{N-1}\}$ is equivalent to show that the following linear system is solvable in $\Z$ 
\begin{equation*}
\begin{split}
b_{1} & = a_{1} \\
b_{2} -b_{1} & = a_{2}\\
b_{3} -b_{2} & = a_{3}\\
\vdots \ \ \ & = \ \   \vdots\\
b_{N-1} -b_{N-2} & = a_{N-1}\\
-b_{N-1} & = a_{N}
\end{split}
\end{equation*} Since $b_{1}+ (b_{2} -b_{1})+ (b_{3} -b_{2}) +...+(b_{N-1} -b_{N-2})=b_{N-1}$ and $a_{1}+...+a_{N-1}=-a_{N}$ the system above has a solution if and only if the system given by the first $N-1$ equations has a solution, and this last system is clearly solvable.  Returning to the main proof notice that for all $1 \leq i \leq N-1$, \[\langle w_{i}, w_{i} \rangle=\langle e_{i}, e_{i} \rangle-2\langle e_{i}, e_{i+1} \rangle+\langle e_{i+1}, e_{i+1} \rangle=2(a+h).\] Suppose that $1 \leq i<j \leq N-1$. If $j\neq i+1$, then \[\langle w_{i}, w_{j} \rangle=\langle e_{i}, e_{j} \rangle-\langle e_{i}, e_{j+1} \rangle-\langle e_{j}, e_{i+1} \rangle+\langle e_{j+1}, e_{j+1} \rangle=-h+h+h-h=0.\] For $j =i+1$, \[\langle w_{i}, w_{i+1} \rangle=\langle e_{i}, e_{i+1} \rangle-\langle e_{i}, e_{i+2} \rangle-\langle e_{i+1}, e_{i+1} \rangle+\langle e_{i+1}, e_{i+2} \rangle=-h+h-a-h=-(a+h).\] Therefore the Gram matrix of $\L^{0}$ in the basis $\{w_{1},...,w_{N-1}\}$ is $(a+h)M$ where $M$ is one of the known Gram matrices of $\mathbb{A}_{N-1}$ (See \cite{conway2013sphere}).
\end{proof}

\begin{corollary}\label{TraceZero}
Let $\L$ be a rank $N$ tame lattice with $\v_{1} \in \L$ and $\{e_{1},...,e_{N}\}$ satisfying the conditions of Definition \ref{Lagrangian}. Let $a:=\langle e_{1}, e_{1}\rangle$ and $h=-\langle e_{1}, e_{2}\rangle.$ Then \[\min_{v \in \L^{0}\setminus \{0\} } \| v\|^{2}=2(a+h).\] Furthermore, such minimum is obtained at any of the vectors $e_{i}-e_{j}$ for $i\neq j$
\end{corollary}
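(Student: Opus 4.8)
The plan is to reduce the statement to Theorem \ref{An}, which identifies $\L^{0}$ with the scaled root lattice $(a+h)\mathbb{A}_{N-1}$, and then to recall the standard fact that the minimum of $\mathbb{A}_{N-1}$ equals $2$, attained exactly at the vectors of the form $f_i - f_j$ in its usual coordinate model. First I would invoke Theorem \ref{An} to say that the Gram matrix of $\L^{0}$ in the basis $\{w_1,\dots,w_{N-1}\}$, with $w_i = e_i - e_{i+1}$, is $(a+h)M$ where $M$ is a Gram matrix of $\mathbb{A}_{N-1}$. Since scaling a Gram matrix by the positive scalar $a+h$ scales all squared norms by $a+h$, we get $\min_{v\in\L^{0}\setminus\{0\}}\|v\|^2 = (a+h)\cdot\min_{u\in\mathbb{A}_{N-1}\setminus\{0\}}\|u\|^2 = (a+h)\cdot 2 = 2(a+h)$.

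For the "furthermore" part, I would note that under the isometry of Theorem \ref{An} the basis vector $w_i = e_i - e_{i+1}$ corresponds to the root $f_i - f_{i+1}$ of $\mathbb{A}_{N-1}$, and more generally $e_i - e_j$ corresponds to $f_i - f_j$; these are precisely the minimal vectors of $\mathbb{A}_{N-1}$ (they have squared norm $2$, and any vector $\sum c_k f_k$ with $\sum c_k = 0$ and not all $c_k$ zero has squared norm $\sum c_k^2 \ge 2$, with equality iff exactly one $c_k$ is $+1$, one is $-1$, the rest $0$). Hence the vectors $e_i - e_j$ with $i \neq j$ have squared norm $2(a+h)$ in $\L^0$ and realize the minimum. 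Alternatively, and perhaps more cleanly for a short proof, one can avoid even quoting the minimum of $\mathbb{A}_{N-1}$: a direct computation already carried out in the proof of Theorem \ref{An} gives $\langle w_i, w_i\rangle = 2(a+h)$, so the minimum is at most $2(a+h)$; and for the lower bound one expands a general element $v = \sum_{k} c_k e_k \in \L^0$ (so $\sum_k c_k = 0$), using the tame Gram structure $\langle e_k,e_k\rangle = a$, $\langle e_k,e_l\rangle = -h$ for $k\ne l$, to get
\[
\|v\|^2 = a\sum_k c_k^2 - h\sum_{k\ne l} c_k c_l = a\sum_k c_k^2 - h\Big(\big(\textstyle\sum_k c_k\big)^2 - \sum_k c_k^2\Big) = (a+h)\sum_k c_k^2,
\]
and then observe that an integer vector $(c_k)$ with $\sum_k c_k = 0$ and not all zero satisfies $\sum_k c_k^2 \ge 2$, with equality exactly when $v = e_i - e_j$ for some $i\ne j$.

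The main obstacle, such as it is, is essentially bookkeeping rather than anything deep: one must be careful that scaling a lattice's Gram matrix by a positive constant $c$ corresponds to scaling the lattice metrically by $\sqrt{c}$, so squared norms scale by exactly $c$ (here $c = a+h$, which is positive since $a,h$ come from a genuine positive-definite Gram matrix with $a = \langle e_1,e_1\rangle$ and $a - h(N-1) = 1$, forcing $a+h = 1 + hN > 0$ as $h \ge 0$ in the relevant range — and in any case $2(a+h) = \langle w_1,w_1\rangle \ge \lambda_1 > 0$). And one should make sure the identification of which vectors attain the minimum of $\mathbb{A}_{N-1}$ is matched correctly through the explicit isometry of Theorem \ref{An}, i.e. that $e_i - e_j \mapsto f_i - f_j$; this is transparent from the basis $w_i = e_i - e_{i+1} \leftrightarrow f_i - f_{i+1}$ since $e_i - e_j = w_i + w_{i+1} + \cdots + w_{j-1}$ for $i < j$. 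I would therefore write the proof using the self-contained computation above, which makes both the value $2(a+h)$ and the characterization of the minimal vectors immediate, and only remark in passing that this recovers the known minimum of $\mathbb{A}_{N-1}$ via Theorem \ref{An}.
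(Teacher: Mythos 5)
Your proposal is correct, and its first half is precisely the paper's own proof: the paper disposes of the corollary in one line by citing Theorem \ref{An} (the identification $\L^{0}\cong (a+h)\mathbb{A}_{N-1}$) together with the standard fact that $\mathbb{A}_{n}$ has minimum $2$. Where you diverge is in preferring the self-contained computation: expanding $v=\sum_k c_k e_k$ with $\sum_k c_k=0$ and using the tame Gram structure to get $\|v\|^{2}=(a+h)\sum_k c_k^{2}$, then minimizing $\sum_k c_k^{2}$ over nonzero integer vectors with zero sum. That route is more elementary (it does not quote the minimum of the root lattice at all) and it buys you something the paper's one-liner leaves implicit: an explicit verification of the ``furthermore'' clause, since equality $\sum_k c_k^{2}=2$ forces exactly one coefficient $+1$ and one $-1$, i.e.\ $v=e_i-e_j$, and conversely every $e_i-e_j$ attains $2(a+h)$ (in the paper this is only visible through the computation $\langle w_i,w_i\rangle=2(a+h)$ inside the proof of Theorem \ref{An} plus the symmetry of conditions (3)--(4) of Definition \ref{Lagrangian}). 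Your identification $e_i-e_j=w_i+\cdots+w_{j-1}\leftrightarrow f_i-f_j$ under the isometry, and your observation that $a+h>0$ follows from positive definiteness (so no sign assumption on $h$ is needed), are both sound; in fact your direct argument even characterizes the full set of minimal vectors of $\L^{0}$, which is slightly more than the corollary asserts.
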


\begin{proof}
This follows from Theorem \ref{An} and from the fact that for any integer $n>1$ the root lattice $\mathbb{A}_{n}$ has minimal distance equal to $2$. 
\end{proof} 
 
\section{Finding a minimal basis}\label{Condi}

As we have seen in previous examples, by considering the construction $\L^{(r,s)}_{\v_{1}}$ applied to the tame lattice $\Z^{N},$ for some values $(r,s)$, we obtained several well-rounded lattices, moreover they have a minimal basis.
Here, we show how to do this for an arbitrary tame lattice $\L$ provided that we have some restrictions on the values $(r,s)$ with respect to $\L$.\\

Throughout this section  $\L \subseteq \R^{N}$ will denote a tame lattice with $\v_{1}$ and $\{e_{1},...,e_{N}\}$ satisfying the conditions of Definition \ref{Lagrangian}. Let $a:=\langle e_{1}, e_{1}\rangle$ and $h=-\langle e_{1}, e_{2}\rangle.$  Let $r, s$ be integers such that  $0 \neq |r| <N$ and let $m:=r+sN.$  Also recall the definitions  $A=r^2$ and $\displaystyle B=\frac{m^2-r^2}{N}$.


\subsection{Shortest vector problem for $\L_{\v_{1}}^{(r,s)}$.}

The shortest non-zero norm in $\L_{\v_{1}}^{(r,s)}$ is by definition  \[\lambda_{1}(\L_{\v_{1}}^{(r,s)}):=\min_{v \in \L_{\v_{1}}^{(r,s)} \setminus \{0\} } \| v\|^{2}.\]

Thanks to Proposition \ref{prop1} and Lemma \ref{NormaV1} we have that  \[\lambda_{1}(\L_{\v_{1}}^{(r,s)}):=\min_{\alpha \in \L \setminus \{0\} } \left(A ||\alpha||^2 + B {\rm T}^2(\alpha)\right).\]

Therefore we are left with the  task of minimizing the function \[f(\alpha):=A ||\alpha||^2 + B {\rm T}^2(\alpha)\] on $\L \setminus \{0\}$. To do this we use the natural partition of $\L$ given by taking the quotient with the sub-lattice $\L^{0}$. Since $\L/\L^{0} \cong \Z$ via the map {\rm T} such partition is  \[\L = \bigcup_{d\in\Z}S_d \] where $S_d:= \{x\in \L  \ | \  {\rm T}(x)= d\}.$ Hence we have  \[\lambda_{1}(\L_{\v_{1}}^{(r,s)}):=\min_{d \ge 0 } (\min_{\alpha \in S_{d} \setminus \{0\}} f(\alpha)).\]

\begin{remark}
We included only non-negative values of $d$ above since $f$ is even and $S_{-d}=-S_{d}$.
\end{remark}

\begin{lemma}\label{Restar}

Let $d$ be an integer. Let $\alpha=a_{1}e_{1}+...+a_{N}e_{N} $ be an element of $S_{d}$.  Suppose there are $a_{i}, a_{j}$ such that $a_{i}-a_{j} > 1$. Then there is $\beta \in S_{d}$ such that $\| \beta\| < \| \alpha \|.$

\end{lemma}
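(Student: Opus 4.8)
The plan is to show that replacing the pair $(a_i,a_j)$ by $(a_i-1,a_j+1)$ — equivalently, subtracting the trace-zero vector $w=e_i-e_j$ from $\alpha$ — strictly decreases the norm. Set $\beta:=\alpha-w=\alpha-(e_i-e_j)$. Since $w\in\L^{0}$, we have $\beta\in S_d$, so $\beta$ lies in the same trace slice and is a legitimate competitor. It therefore suffices to expand $\|\beta\|^2=\|\alpha\|^2-2\langle\alpha,w\rangle+\|w\|^2$ and verify that $-2\langle\alpha,w\rangle+\|w\|^2<0$, i.e. that $\langle\alpha,w\rangle>\tfrac12\|w\|^2$.

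\textbf{Key computations.} First, by Corollary \ref{TraceZero} (or directly from the Gram data of Definition \ref{Lagrangian}), $\|w\|^2=\|e_i-e_j\|^2=2(a+h)$, so the target inequality becomes $\langle\alpha,e_i-e_j\rangle>a+h$. Next I would compute $\langle\alpha,e_i-e_j\rangle$ using $\alpha=\sum_k a_k e_k$ and the Lagrangian inner-product relations $\langle e_k,e_k\rangle=a$, $\langle e_k,e_l\rangle=-h$ for $k\neq l$. Writing $\langle\alpha,e_i\rangle=a_i a+\sum_{k\neq i}a_k(-h)=a_i a-h(d-a_i)=a_i(a+h)-hd$, where $d=\sum_k a_k={\rm T}(\alpha)$, and similarly $\langle\alpha,e_j\rangle=a_j(a+h)-hd$, we get
\[\langle\alpha,e_i-e_j\rangle=(a_i-a_j)(a+h).\]
Since $a_i-a_j>1$ and $a_i,a_j$ are integers, $a_i-a_j\geq 2$, hence $\langle\alpha,e_i-e_j\rangle\geq 2(a+h)>a+h$, where positivity of $a+h$ follows from $a+h>0$ (e.g. because $2(a+h)=\|e_i-e_j\|^2>0$ as $\L$ has rank $N\geq 2$ and $e_i\neq e_j$). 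Therefore
\[\|\beta\|^2=\|\alpha\|^2-2(a_i-a_j)(a+h)+2(a+h)=\|\alpha\|^2-2(a_i-a_j-1)(a+h)<\|\alpha\|^2,\]
which is the desired strict inequality.

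\textbf{Main obstacle.} There is no serious obstacle here; the only points requiring a little care are (i) confirming $a+h>0$ so that the sign of the correction term is genuinely negative — this is immediate from $\|e_i-e_j\|^2=2(a+h)$ being a positive length in a rank-$N\geq 2$ lattice — and (ii) being careful that the computation of $\langle\alpha,e_i\rangle$ correctly collects the $N-1$ off-diagonal terms, each contributing $-h$, with total off-diagonal coefficient sum $d-a_i$. Once these are in place the result follows by the one-line expansion above. (One could alternatively phrase the whole argument via Proposition \ref{prop2}-style bookkeeping, but the direct expansion of $\|\alpha-(e_i-e_j)\|^2$ is the cleanest route.)
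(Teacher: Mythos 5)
Your proof is correct and follows essentially the same route as the paper: you take $\beta=\alpha+e_j-e_i$, compute $\langle\alpha,e_i-e_j\rangle=(a_i-a_j)(a+h)$ from the Lagrangian Gram data, and conclude $\|\beta\|^2=\|\alpha\|^2-2(a+h)(a_i-a_j-1)<\|\alpha\|^2$, exactly as in the paper (which leaves the positivity of $a+h$ implicit, whereas you justify it via $\|e_i-e_j\|^2=2(a+h)>0$).
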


\begin{proof}

Let $\beta= \alpha + e_{j} -e_{i}.$ Since $e_{j}-e_{i} \in \ker({\rm T})$ we have that $\beta \in S_{d}$. Taking square norms we get \[\|\beta\|^{2}=\|\alpha\|^{2}+ 2\left(\langle \alpha, e_{j}\rangle- \langle \alpha, e_{i}\rangle\right) +  \|e_{i}\|^{2}- 2\langle e_{i}, e_{j}\rangle+\|e_{j}\|^{2}=\|\alpha\|^{2}+ 2\left(\langle \alpha, e_{j}\rangle- \langle \alpha, e_{i}\rangle +a+h \right).\] On the other hand, for all  $1 \leq k \leq N$, \[ \langle \alpha, e_{k}\rangle= \sum_{l \neq k} a_{l}\langle e_{l}, e_{k}\rangle+ a_{k}a=-h\sum_{l \neq k} a_{l}+a_{k}a=-h({\rm T}(\alpha) -a_{k} )+ a_{k}a=-h{\rm T}(\alpha) + a_{k}(a+h).\] In particular, $\langle \alpha, e_{j}\rangle- \langle \alpha, e_{i}\rangle = (a_{j}-a_{i})(a+h)$ and thus \[\|\beta\|^{2}=\|\alpha\|^{2}+2(a+h)(a_{j}-a_{1}+1) =\|\alpha\|^{2}-2(a+h)(a_{i}-a_{j}-1) < \|\alpha\|^{2}.\]

\end{proof}

For a subset $I \subset \{1,...,N\}$ we denote by $\displaystyle E_{I}:=\sum_{i \in I} e_{i}$. For instance, $E_{\emptyset}=0$ and $E_{\{1,...,N\}}=\v_{1}$. Notice that  ${\rm T}(E_{I})=\#I$ for all subset $I$.

\begin{corollary}\label{minlemma}
Let $d$ be a positive integer. Then $\displaystyle \min_{\substack{x\in S_d\\d\neq 0}}\{||x||^2\}$ is achieved at some \[\alpha=E_I +C\v_{1}\]
where $C$ is a non-negative integer and $\#I < N$. Moreover, \#I is the residue class of $d$ modulo $N$ and $C=(d-\#I)/N.$
\end{corollary}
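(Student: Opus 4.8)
The plan is to combine Lemma \ref{Restar} with a direct description of the vectors it cannot shrink further. First I would fix a positive integer $d$ and pick $\alpha = a_1 e_1 + \dots + a_N e_N \in S_d$ realizing $\min_{x \in S_d}\|x\|^2$ (such a minimizer exists since $S_d$ is a coset of the lattice $\L^0$, hence a discrete set, and $\|\cdot\|^2$ is proper on it). By Lemma \ref{Restar}, for such a minimizer we must have $a_i - a_j \le 1$ for all pairs $i,j$; by symmetry of the roles of $i,j$ this forces $|a_i - a_j| \le 1$ for all $i,j$, i.e. all coordinates $a_i$ take one of two consecutive integer values, say $C$ and $C+1$. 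Writing $I := \{\, i : a_i = C+1 \,\}$, we then get $\alpha = E_I + C\v_1$, since $\v_1 = e_1 + \dots + e_N$ by Definition \ref{Lagrangian}(1).

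Next I would pin down $C$ and $\#I$. Applying ${\rm T}$ and using ${\rm T}(e_i) = 1$ (Definition \ref{Lagrangian}(2)), hence ${\rm T}(E_I) = \#I$ and ${\rm T}(\v_1) = N$, we get $d = {\rm T}(\alpha) = \#I + CN$. Since $0 \le \#I \le N$, this determines $\#I$ as the residue of $d$ modulo $N$ and $C = (d - \#I)/N$ — with one boundary subtlety: if $\#I = N$ then $I = \{1,\dots,N\}$ and $E_I + C\v_1 = (C+1)\v_1$, which is the same as $E_\emptyset + (C+1)\v_1$, so we may always choose the representation with $\#I < N$ (this also matches $d \not\equiv 0$ when $d\equiv 0$ is excluded, but for general positive $d$ the case $\#I = N$ simply collapses to $\#I = 0$ with $C$ increased by one). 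Finally I must check $C \ge 0$: since $d > 0$ and $0 \le \#I \le N-1$, we have $CN = d - \#I > -N$, so $C \ge 0$ as $C$ is an integer — unless $d < N$ and $\#I = d$, in which case $C = 0$, still non-negative.

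The only real obstacle is the upgrade from "$a_i - a_j \le 1$ for the pairs produced by the lemma" to "$|a_i - a_j| \le 1$ for all pairs": Lemma \ref{Restar} as stated only rules out $a_i - a_j > 1$, but since $i$ and $j$ are arbitrary we may swap them, so the absence of any pair with difference exceeding $1$ is immediate — there is genuinely no gap here, just a remark to make explicit. Everything else is bookkeeping with the relation $d = \#I + CN$ and the normalization $\#I < N$. I would present the argument in the order: (i) existence of a minimizer; (ii) two-value structure of its coordinates via Lemma \ref{Restar}; (iii) rewrite as $E_I + C\v_1$; (iv) extract $\#I$ and $C$ from ${\rm T}(\alpha) = d$ and normalize so that $\#I < N$ and $C \ge 0$.
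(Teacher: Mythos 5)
Your proposal is correct and follows essentially the same route as the paper: apply Lemma \ref{Restar} to a norm minimizer in $S_d$ to force all coordinates within $1$ of each other, write the minimizer as $E_I + C\v_1$, and read off $\#I$ and $C$ from ${\rm T}(\alpha)=d$. The only difference is presentational — you phrase the conclusion as "two consecutive values $C$, $C+1$" with a normalization of the case $\#I=N$, while the paper splits into the cases of a non-positive coefficient versus all coefficients positive — and your explicit treatment of existence of the minimizer and of the relation $d=\#I+CN$ is if anything slightly more complete.
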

\begin{proof}
Let $\displaystyle \alpha=\sum_{i=1}^{N}a_i e_i\in S_d$ a non-zero element having minimal norm.  Let \[a_k=\max\{a_i~:~1\leq i\leq N\}.\] Replacing $\alpha$ by $-\alpha$ if necessary, we may assume that $a_k \ge 1$.  Suppose that there is some $a_{j} \leq 0$. Then, since the $a_{j}$s can not be more than one integer apart, by Lemma \ref{Restar}, we must have that all non-positive coefficients are equal to $0$ and all positives are equal to $1$. Thus in such case $\alpha$ is of the form $E_{I}$ where $\#I<N$ since there are coefficients equal to $0$. Now, if all $a_{j}$s are positive let \[c=\min\{a_i~:~1\leq i\leq N\}.\] If $c=a_{k}$ then $\alpha =c\v_{1}$, otherwise $a_{k}=c+1$ and 
$\alpha=E_{I}+c\v_{1}$ where $I$ is the subset of elements $i$ such that $a_{i}=a_{k}$. 
\end{proof}

\begin{theorem}\label{MinPartd}
Let $d$ be a positive integer. Let $k$ be the residue class of $d$ modulo $N$ and let $c=(d-k)/N$. Then, \[\min_{\alpha \in S_{d} \setminus \{0\}} f(\alpha)=f(E_{I})+c^2f(\v_{1})+ 2ck(A+NB)\] where $I$ is a subset of size $k$.

\end{theorem}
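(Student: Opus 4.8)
The plan is to substitute the minimizer furnished by Corollary~\ref{minlemma} into $f$ and expand it using the polarization identity of the quadratic form $f$.

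First I would reduce the problem to Corollary~\ref{minlemma}. On $S_d$ the linear form $T$ is identically equal to $d$, so for $\alpha\in S_d$ one has $f(\alpha)=A\|\alpha\|^2+Bd^2$; since $A=r^2>0$, the functions $f$ and $\|\cdot\|^2$ differ on $S_d$ only by a positive multiplicative constant and an additive constant, hence they are minimized at the same points of $S_d\setminus\{0\}$. By Corollary~\ref{minlemma} such a minimum is attained at $\alpha_0:=E_I+c\v_1$, where $I$ has size $k$ (the residue of $d$ modulo $N$) and $c=(d-k)/N\ge 0$; note $\alpha_0\ne 0$ because $d>0$. From the Gram data of a Lagrangian basis, $\|E_I\|^2=ka-k(k-1)h$, which depends only on $k$, so $f(E_I)=A\bigl(ka-k(k-1)h\bigr)+Bk^2$ is the same for every size-$k$ subset $I$; this is what makes the right-hand side of the statement unambiguous.

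It then remains to evaluate $f(\alpha_0)$. Since $f(\alpha)=A\|\alpha\|^2+B\,T(\alpha)^2$ is a quadratic form, it is polarized by the symmetric bilinear form $B_f(x,y):=A\langle x,y\rangle+B\,T(x)\,T(y)$, so that $f(x+y)=f(x)+2B_f(x,y)+f(y)$ and $f(cx)=c^2f(x)$. Applying this with $x=E_I$ and $y=c\v_1$ gives
\[ f(\alpha_0)=f(E_I)+2c\,B_f(E_I,\v_1)+c^2 f(\v_1). \]
Now $\langle E_I,\v_1\rangle=T(E_I)=k$ (as recorded just before Corollary~\ref{minlemma}) and $T(\v_1)=N$ by Lemma~\ref{NormaV1}, whence $B_f(E_I,\v_1)=Ak+BkN=k(A+NB)$. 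Substituting,
\[ \min_{\alpha\in S_d\setminus\{0\}}f(\alpha)=f(\alpha_0)=f(E_I)+2ck(A+NB)+c^2 f(\v_1), \]
which is exactly the assertion.

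There is no genuine obstacle here: the combinatorial heart of the argument is Corollary~\ref{minlemma}, which is already available, and what remains is the one-line polarization identity together with the two elementary facts $\langle E_I,\v_1\rangle=k$ and $\|\v_1\|^2=T(\v_1)=N$. The only place to be slightly careful is the reduction of ``minimize $f$ on $S_d$'' to ``minimize $\|\cdot\|^2$ on $S_d$'', which uses $A=r^2>0$ and the constancy of $T$ on $S_d$.
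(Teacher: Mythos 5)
Your proof is correct and follows essentially the same route as the paper: reduce to minimizing $\|\cdot\|^2$ on $S_d$ (where $T$ is constant), invoke Corollary~\ref{minlemma} to locate the minimizer $E_I+c\v_1$, and expand $f(E_I+c\v_1)$. The only difference is that you make explicit the cross-term computation $B_f(E_I,\v_1)=k(A+NB)$ via the polarization identity, which the paper leaves implicit.
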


\begin{proof}

Since in $S_{d}$ the function ${\rm T}$ is constant, the minimum value of $f(\alpha)=A\|\alpha\|^{2}+B {\rm T}(\alpha)^{2}$ is attained whenever $\|\alpha\|^{2}$ is minimal. Thanks to Corollary \ref{minlemma} such a minimum is attained at $\alpha=E_I +c\v_{1}$, hence the minimum value of $f$ over $S_{d}$ is $f(E_I +c\v_{1})=f(E_{I})+c^2f(\v_{1})+ 2ck(A+NB).$

\end{proof}

\begin{lemma}\label{lemmaB}
Let $k$ be an integer $0\leq k\leq N$ and let $I\subset\{1,\dots,N\}$ such that $|I|=k$, then
\[||E_I||^2=k(a-(k-1)h)=k(1+(N-k)h).\] In particular, \[f(E_{I})= Ak(1+(N-k)h)+Bk^2.\]

\end{lemma}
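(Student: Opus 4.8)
The idea is a direct computation using the Lagrangian basis and the bilinear form data $a = \langle e_i,e_i\rangle$ and $-h = \langle e_i,e_j\rangle$ for $i\neq j$ recorded in Definition \ref{Lagrangian}, together with the identity $a - (N-1)h = 1$ coming from $\langle e_1,\v_1\rangle = 1$ (equivalently, $\langle e_i, e_1 + \dots + e_N\rangle = a - (N-1)h = 1$, which also gives $a = 1 + (N-1)h$). First I would fix $I \subseteq \{1,\dots,N\}$ with $|I| = k$ and simply expand $\|E_I\|^2 = \langle \sum_{i\in I} e_i, \sum_{j\in I} e_j\rangle$. There are exactly $k$ diagonal terms, each contributing $a$, and $k(k-1)$ off-diagonal ordered pairs $(i,j)$ with $i\neq j$, each contributing $-h$. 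Hence
\[
\|E_I\|^2 = ka - k(k-1)h = k\bigl(a - (k-1)h\bigr).
\]
This is the first claimed equality, and it is visibly independent of which size-$k$ subset $I$ is chosen.

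Next I would rewrite the factor $a - (k-1)h$ using the tameness relation $a = 1 + (N-1)h$: substituting gives $a - (k-1)h = 1 + (N-1)h - (k-1)h = 1 + (N-k)h$. Therefore $\|E_I\|^2 = k\bigl(1 + (N-k)h\bigr)$, which is the second form of the statement.

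Finally, for the ``in particular'' clause, recall from the setup of this section that $f(\alpha) = A\|\alpha\|^2 + B\,{\rm T}^2(\alpha)$ with $A = r^2$, $B = (m^2 - r^2)/N$, and that ${\rm T}(E_I) = \#I = k$ (noted just before Corollary \ref{minlemma}). Plugging in,
\[
f(E_I) = A\|E_I\|^2 + B\,{\rm T}(E_I)^2 = Ak\bigl(1 + (N-k)h\bigr) + Bk^2,
\]
exactly as stated. I would keep the two expressions $k(a-(k-1)h)$ and $k(1+(N-k)h)$ both visible so the reader sees the role of the tameness normalization.

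There is no real obstacle here: the only thing to be careful about is the bookkeeping of diagonal versus off-diagonal terms (ordered pairs, so $k(k-1)$ and not $\binom{k}{2}$) and the correct invocation of $a = 1 + (N-1)h$, which should be cited back to condition (2) of Definition \ref{Lagrangian} or to the explicit form of the Gram matrix given in the introduction. Everything else is a one-line expansion.
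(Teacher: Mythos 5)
Your proof is correct and follows essentially the same route as the paper: expand $\|E_I\|^2$ into $k$ diagonal terms and $k(k-1)$ off-diagonal terms to get $k(a-(k-1)h)$, use $a=1+(N-1)h$ to obtain the second form, and plug into $f$. The only cosmetic difference is that you deduce $a=1+(N-1)h$ directly from condition (2) of Definition \ref{Lagrangian} (i.e.\ $\langle e_1,\v_1\rangle=1$), whereas the paper gets it by specializing the first formula to $k=N$ and invoking $\|\v_1\|^2=N$ from Lemma \ref{NormaV1}; the two derivations are equivalent.
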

\begin{proof}
We may assume that $k \neq 0$. \[\||E_I||^2 =\langle E_{I}, E_{I} \rangle =\sum_{i\in I} \langle e_{i}, e_{j} \rangle +  \sum_{\substack{ {i,j\in I} \\ i\neq j}} \langle e_{i},  e_{j}\rangle =\sum_{i\in I} a  -  \sum_{\substack{ {i,j\in I} \\ i\neq j}}h=ak-(k^2-k)h=k(a-(k-1)h).\] Using the case $k=N$, i.e., $E_{I}=\v_{1}$ we see that $N=\|E_I||^2= N(a-(N-1)h)$ hence $a=1+(N-1)h$. Replacing this in the first equality the second follows, and so it does the claim about $f(E_{I})$.
\end{proof}

\subsection{Conditions on minimal basis.}

To see whether or not $\L^{(r,s)}_{\v_{1}}$ has a minimal basis, firstly we should find a basis in which all the vectors have the same norm. Since we have calculated the min values of $f$ over each $S_{d}$, we then should compare the value of the norms of the proposed basis versus the minimal values of $f$. Thanks to Proposition \ref{prop2} we have that all the elements of the basis $\{\Phi_{r,s}(e_{1}), ..., \Phi_{r,s}(e_{N})\}$ have norm $aA+B$. Hence, if $aA+B$ happened to be equal to $\lambda_{1}(\L_{\v_{1}}^{(r,s)})$, we should have at least $aA+B \leq f(\v_{1}) =N(A+NB).$ As it turns out this is already a pretty strong condition as the next theorem shows.

\begin{theorem}\label{AlmostMain}

Suppose that $aA+B \leq f(\v_{1}).$ Then,  \[aA+B =\min_{d > 0 } (\min_{\alpha \in S_{d} \setminus \{0\}} f(\alpha)).\]

\end{theorem}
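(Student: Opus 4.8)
The plan is to reduce the claim to a finite, tractable estimate over the partition $\L = \bigcup_{d \geq 0} S_d$. By Theorem~\ref{MinPartd}, for $d > 0$ with $k$ the residue of $d$ modulo $N$ and $c = (d-k)/N$, the minimum of $f$ on $S_d$ equals $f(E_I) + c^2 f(\v_1) + 2ck(A+NB)$ where $|I| = k$; using Lemma~\ref{lemmaB} this becomes an explicit quadratic-type expression in $c$ and $k$ with coefficients involving $A, B, N, h$. So the task is: assuming $aA + B \leq f(\v_1) = N(A+NB)$, show $aA + B \leq f(E_I) + c^2 f(\v_1) + 2ck(A+NB)$ for all admissible pairs $(c,k)$ with $(c,k) \neq (0,0)$, i.e.\ for all $c \geq 0$, $0 \leq k < N$, not both zero, and also (the $k=0$, $c\geq 1$ boundary case) $aA+B \leq c^2 f(\v_1)$.

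First I would dispose of the case $c \geq 1$: since $f(E_I) \geq 0$, $2ck(A+NB) \geq 0$, and $c^2 f(\v_1) \geq f(\v_1) \geq aA+B$ by hypothesis, the inequality is immediate. So the real content is the case $c = 0$, where we must show
\[
aA + B \leq f(E_I) = Ak\bigl(1 + (N-k)h\bigr) + Bk^2
\]
for every $k$ with $1 \leq k \leq N-1$ (the value $k = N$ gives exactly $f(\v_1)$, already covered). Recall $a = 1 + (N-1)h$, so $aA + B = A(1+(N-1)h) + B$. Define $g(k) := f(E_I) - (aA+B) = A\bigl(k + k(N-k)h - 1 - (N-1)h\bigr) + B(k^2 - 1)$. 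The goal is $g(k) \geq 0$ for $1 \leq k \leq N-1$. Note $g(1) = 0$ and $g(N) = f(\v_1) - (aA+B) \geq 0$ by hypothesis. Since $g$ is (up to the $k(N-k)h$ term) quadratic in $k$ — in fact $g(k) = -(Ah - B)k^2 + (A(N h + 1))k - (A(1 + (N-1)h) + B)$, a genuine quadratic in $k$ — and it vanishes at $k=1$ and is $\geq 0$ at $k = N$, I would factor $g(k) = (k-1)\bigl(\text{linear in }k\bigr)$ and check that the linear factor has the right sign on the integer range $2 \leq k \leq N-1$. Concretely, write $g(k) = (k-1) q(k)$ with $q(k) = -(Ah - B)k + \bigl(A(Nh+1) - A(1+(N-1)h) - B\bigr) = -(Ah-B)k + (Ah - B) \cdot \text{(something)}$; one then reads off that $q$ is monotone and that $q(N) = g(N)/(N-1) \geq 0$ forces $q(k) \geq 0$ for all $k \leq N$ precisely when the leading coefficient $-(Ah - B)$ has the appropriate sign, or else $q(2) \geq 0$ is the binding constraint and again follows from $q(N) \geq 0$ together with monotonicity in the other sign regime. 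Either way, $q(k) \geq \min(q(2), q(N)) \cdot(\text{correction})$, and $q(N) \geq 0$ from the hypothesis closes the argument.

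The main obstacle I anticipate is the sign bookkeeping for the linear factor $q(k)$: the sign of $Ah - B$ is not determined by the hypothesis alone, so the argument splits into the cases $Ah \geq B$ and $Ah < B$. In the first case $q$ is non-increasing in $k$, so $q(k) \geq q(N) \geq 0$ on $2 \leq k \leq N$ and we are done; in the second case $q$ is increasing, so it suffices that $q(2) \geq 0$, which I would verify is again a consequence of $aA+B \leq f(\v_1)$ — intuitively because $B$ large relative to $Ah$ makes the $Bk^2$ term in $f(E_I)$ dominate. I would present the common case via the factorization $g(k) = (k-1)q(k)$ and handle whichever endpoint of $q$ is binding. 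If the two-case split proves unwieldy, an alternative is to treat $g$ directly as a concave or convex parabola in the real variable $k$ (sign of $Ah - B$ again), use that it is $\geq 0$ at the two endpoints $k=1$ and $k=N$ of the relevant interval when convex (hence $\geq 0$ on $[1,N]$ trivially fails for convex — so actually concave is the good case giving $\geq 0$ on all of $[1,N]$), and when convex note the parabola is $\geq 0$ outside its two roots, one of which is $k = 1$, so $g \geq 0$ on $[1, \text{other root}]$ and one checks the other root is $\leq N$, again via $g(N) \geq 0$. This reduces everything to elementary properties of quadratics plus the single hypothesis.
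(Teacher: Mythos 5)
Your proposal is correct and takes essentially the same route as the paper: the same reduction via Theorem \ref{MinPartd}, the same one-line dispatch of the $c\geq 1$ case, and for $c=0$ a quadratic-in-$k$ argument that is just a shifted version of the paper's Proposition \ref{prop6} (the paper works directly with the parabola $g(t)=At(1+(N-t)h)+Bt^2$, noting $g(0)=0<g(1)\leq g(N)$ and hence $g(1)\leq g(k)$ on $[1,N]$). One small caveat: the parenthetical ``alternative'' at the end gets the convex case backwards (an upward parabola is $\leq 0$ \emph{between} its roots, and the relevant observation is that $g(0)=-(aA+B)<0$ forces the second root below $0$, not that it is $\leq N$), but this does not matter since your main factorization $g(k)=(k-1)\bigl((B-Ah)k+(aA+B)\bigr)$ with the sign split on $B-Ah$ already closes the argument: when $B<Ah$ the linear factor is decreasing and $q(N)\geq 0$ is exactly the hypothesis, while when $B\geq Ah$ it is increasing with $q(1)=(B-Ah)+(aA+B)>0$ unconditionally.
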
 

\begin{proof}
Since $e_{1} \in  S_{1}$ and $aA+b=f(e_{1})$ we have that $\displaystyle \min_{d > 0 } (\min_{\alpha \in S_{d} \setminus \{0\}} f(\alpha)) \leq aA+B$. To show the opposite inequality; consider $d$ to be a positive integer, and let $k$ be the residue class of $d$ modulo $N$ and $c=(d-k)/N$.  Thanks to Theorem \ref{MinPartd} we have that \[\min_{\alpha \in S_{d} \setminus \{0\}} f(\alpha)=f(E_{I})+c^2f(\v_{1})+ 2ck(A+NB)\] where $I$ is subset of $\{1,...,N\}$ of size $k$. If $c\neq 0$, we have that $f(E_{I})+c^2f(\v_{1})+ 2ck(A+NB) \ge f(\v_{1}) \ge aA+B$. If $c=0$, then $k \neq 0$ and thanks to the next proposition $f(E_{I}) \ge aA+B$. Thus, in either case  \[\min_{\alpha \in S_{d} \setminus \{0\}} f(\alpha) \ge aA+B\] from which the result follows.

\end{proof}

\begin{proposition}\label{prop6}
Let $1 \leq k\leq N$ be an integer, and let $I\subset\{1,\dots,N\}$ be a subset of size $k$. Suppose that $aA+B \leq f(\v_{1})$. Then,
\[ aA+B \leq  f(E_{I}).\]
\end{proposition}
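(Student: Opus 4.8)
The plan is to reduce the inequality $aA+B\le f(E_I)$ to a polynomial inequality in the single variable $k$ (the size of $I$), with $1\le k\le N$, and then to exploit convexity together with the boundary hypothesis $aA+B\le f(\v_1)$. Using Lemma \ref{lemmaB} I can write $f(E_I)=Ak(1+(N-k)h)+Bk^2$, while $aA+B=f(e_1)$ corresponds to the same expression evaluated at $k=1$ (since $f(E_{\{1\}})=f(e_1)=aA+B$, using $a=1+(N-1)h$). Likewise $f(\v_1)=f(E_I)$ at $k=N$. So the statement I want is exactly: the function
\[
g(k):=f(E_I)\big|_{|I|=k}=Ak+Akh(N-k)+Bk^2=k\bigl(A+AhN+Bk-Ahk\bigr)
\]
satisfies $g(k)\ge g(1)$ for all integers $1\le k\le N$, given that $g(N)\ge g(1)$.

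First I would compute $g$ as an explicit quadratic in $k$: expanding,
\[
g(k)=(B-Ah)k^2+(A+AhN)k.
\]
The key structural point is the sign of the leading coefficient $B-Ah$. If $B-Ah\ge 0$, then $g$ is convex (or linear) on $[1,N]$, and a convex function on an interval attains its maximum at an endpoint and lies below the chord joining the endpoint values; more precisely, for $1\le k\le N$ one has $g(k)\le \max\{g(1),g(N)\}$ is the wrong direction — instead I should use that a convex function satisfies $g(k)\ge$ the linear interpolation? No: for a \emph{lower} bound I want $g(k)\ge g(1)$. The clean way: on $[1,N]$, $g$ convex means $g$ is either monotone or has a single interior minimum. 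If its vertex $k^\ast=-\tfrac{A+AhN}{2(B-Ah)}$ lies outside $[1,N]$ or $g$ is monotone increasing, then $g(k)\ge g(1)$ is immediate. If the vertex lies inside, I need $g(1)\le g(k^\ast)$ — but that's false in general for a convex function with interior minimum. So the real content must be that $B-Ah<0$, i.e. $g$ is \emph{concave}; then a concave function on $[1,N]$ lies \emph{above} the chord through $(1,g(1))$ and $(N,g(N))$, and since $g(N)\ge g(1)$ the chord itself is $\ge g(1)$ on $[1,N]$, giving $g(k)\ge g(1)$ for all $k\in[1,N]$.

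Therefore the crux is to show $B-Ah<0$, i.e. $m^2-r^2 < Nr^2h$, equivalently $\bigl(\tfrac{m}{r}\bigr)^2 < 1+Nh$. Here is where the hypothesis $aA+B\le f(\v_1)$ enters quantitatively: this hypothesis reads $a r^2 + \tfrac{m^2-r^2}{N}\le N r^2 + N(m^2-r^2)$, which after using $a=1+(N-1)h$ and simplifying should pin $\bigl(\tfrac{m}{r}\bigr)^2$ into a range whose upper end is $\le 1+Nh$ (this is consistent with the displayed hypothesis of the main theorem, whose right-hand bound $\tfrac{(aN-1)(N+1)}{N-1}=(N+1)h$ is indeed $< 1+Nh$ when $h\ge 1$, and one checks the tame relation forces $h\ge$ a suitable bound). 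I expect the main obstacle to be exactly this algebra: extracting from $aA+B\le f(\v_1)$ the inequality $B\le Ah$ cleanly, handling the edge case $B=Ah$ (where $g$ is linear and the chord argument still works), and confirming that no positivity hypothesis on $h$ is secretly needed — if $h$ could be negative the concavity could fail, so I would need to invoke that a tame lattice has $h>0$ (which follows since $\L^0\cong(a+h)\mathbb{A}_{N-1}$ is a genuine lattice, forcing $a+h>0$, together with $a=1+(N-1)h$ forcing $h>0$ for $N\ge 2$ once $a>0$; alternatively $\|E_I\|^2=k(1+(N-k)h)>0$ for $1\le k\le N-1$ forces $h>-\tfrac1{N-k}$, and combined with the other constraints gives $h>0$). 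Once $B\le Ah$ is in hand, the concave-chord argument finishes the proof in one line.
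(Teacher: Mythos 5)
Your reduction to the quadratic $g(t)=(B-Ah)t^{2}+A(1+Nh)t$, with $g(1)=aA+B$, $g(k)=f(E_{I})$ and $g(N)=f(\v_{1})$, is exactly the paper's setup, and your handling of the concave case (the graph lies above the chord, and $g(N)\ge g(1)$ makes the chord $\ge g(1)$) is correct. The gap is the pivotal claim that the hypothesis $aA+B\le f(\v_{1})$ forces $B-Ah<0$. It does not: $aA+B\le N(A+NB)$ is equivalent to $A(a-N)\le (N^{2}-1)B$, i.e. to $\frac{Na-1}{N^{2}-1}\le\left(\frac{m}{r}\right)^{2}$, which is a \emph{lower} bound on $\left(\frac{m}{r}\right)^{2}$ (it is precisely the left-hand inequality of Theorem \ref{main}); the upper bound there comes from the separate condition $aA+B\le 2A(a+h)$, which is not assumed in this proposition. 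Your sanity check is also miscomputed: since $aN-1=(N-1)(1+Nh)$, one has $\frac{(aN-1)(N+1)}{N-1}=(N+1)(1+Nh)$, not $(N+1)h$, and this exceeds $1+Nh$, so even the full hypotheses of the main theorem would not yield $B<Ah$. Concretely, take $\L=\Z^{N}$ (tame with $a=1$, $h=0$) and $r=s=1$: the hypothesis holds, yet $B=\frac{(N+1)^{2}-1}{N}>0=Ah$, so the convex case genuinely occurs and your argument leaves it uncovered.

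The missing ingredient is the value $g(0)=0$, which the paper exploits. Since $1+Nh=a+h>0$ (for instance because $\|e_{1}-e_{2}\|^{2}=2(a+h)$, or from $\L^{0}\cong(a+h)\mathbb{A}_{N-1}$ in Theorem \ref{An}), the linear coefficient $A(1+Nh)$ is positive. Hence when $B-Ah\ge 0$ the vertex $k^{*}=-\frac{A(1+Nh)}{2(B-Ah)}$ lies at a nonpositive abscissa (or $g$ is linear and increasing), so $g$ is increasing on $[0,\infty)$ and in particular $g(k)\ge g(1)$ for all $k\in[1,N]$; equivalently, convexity together with $g(0)<g(1)$ already forces $g$ to increase on $[1,N]$. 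Splicing this into your case analysis --- convex/linear case via $g(0)=0<g(1)$ and positivity of $A(1+Nh)$, concave case via your chord argument using only $g(1)\le g(N)$ --- closes the proof without extracting any sign condition on $B-Ah$ from the hypothesis, and is essentially the paper's one-parabola argument through the three points $t=0,1,N$.
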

\begin{proof}

Consider the parabola \[g(t):=At(1+(N-t)h)+Bt^2.\]

By Lemma \ref{lemmaB} we have that $aA+B=g(1), f(\v_{1})=g(N)$, and $f(E_{I})=g(k)$. Also, notice that $g(0)=0$. Thus, $ g(0) < g(1) \leq g(N)$. Since $g$ is a parabola and $0<1<N$, the function $g$ is increasing in the interval $[1,N]$. Hence, $aA+B=g(1) \leq g(k)=f(E_{I})$.

\end{proof}

\begin{corollary}\label{ElUltimoCoro}
Suppose that $aA+B \leq N(A+NB).$ Then, \[ \lambda_{1}(\L_{\v_{1}}^{(r,s)})=\min\{ 2A(a+h), aA+B \}.\]
\end{corollary}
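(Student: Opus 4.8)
The plan is to combine Theorem \ref{AlmostMain} with the already-computed minimum of $f$ over the kernel $\L^0$, and then observe that $\lambda_1(\L_{\v_1}^{(r,s)})$ is the smaller of these two contributions. Recall from the partition $\L=\bigcup_{d\in\Z}S_d$ that
\[
\lambda_1(\L_{\v_1}^{(r,s)})=\min_{d\ge 0}\Bigl(\min_{\alpha\in S_d\setminus\{0\}}f(\alpha)\Bigr),
\]
and that the $d=0$ stratum is exactly $S_0=\L^0=\ker({\rm T})$. So the whole computation splits as the minimum of two quantities: the contribution from $d=0$ and the contribution from $d>0$.

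First I would handle the $d=0$ term. For $\alpha\in\L^0$ we have ${\rm T}(\alpha)=0$, hence $f(\alpha)=A\|\alpha\|^2+B\cdot 0=A\|\alpha\|^2$. By Corollary \ref{TraceZero} the minimum of $\|\alpha\|^2$ over $\L^0\setminus\{0\}$ equals $2(a+h)$, attained at $e_i-e_j$. Therefore
\[
\min_{\alpha\in S_0\setminus\{0\}}f(\alpha)=2A(a+h).
\]
Next I would handle the $d>0$ term: this is precisely where the hypothesis $aA+B\le N(A+NB)=f(\v_1)$ is used. Under that hypothesis, Theorem \ref{AlmostMain} gives
\[
\min_{d>0}\Bigl(\min_{\alpha\in S_d\setminus\{0\}}f(\alpha)\Bigr)=aA+B.
\]
Putting the two strata together yields $\lambda_1(\L_{\v_1}^{(r,s)})=\min\{2A(a+h),\,aA+B\}$, which is the claim. (One should also note $aA+B=f(e_1)$ is genuinely attained at $\Phi_{(r,s)}(e_1)\in\L_{\v_1}^{(r,s)}$ and $2A(a+h)$ at $\Phi_{(r,s)}(e_i-e_j)$, so both candidate values really occur as norms of lattice vectors and the minimum is meaningful.)

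I do not expect any real obstacle here: the corollary is essentially a bookkeeping step that merges the $d=0$ case (trivial, since $f$ restricts to $A\|\cdot\|^2$ on the kernel and Corollary \ref{TraceZero} supplies the minimum) with the $d>0$ case (exactly Theorem \ref{AlmostMain}). The only point requiring a line of care is translating the hypothesis $aA+B\le N(A+NB)$ into the form $aA+B\le f(\v_1)$ needed to invoke Theorem \ref{AlmostMain}, which is immediate from $f(\v_1)=A\|\v_1\|^2+B\,{\rm T}(\v_1)^2=AN+BN^2=N(A+NB)$ via Lemma \ref{NormaV1}.
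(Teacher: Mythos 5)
Your proposal is correct and follows essentially the same route as the paper: partition $\L$ by the value of ${\rm T}$, use Theorem \ref{AlmostMain} for the strata with $d>0$, and Corollary \ref{TraceZero} for the kernel stratum where $f$ reduces to $A\|\cdot\|^2$. The extra remarks (attainment of both candidate values and the identification $N(A+NB)=f(\v_1)$ via Lemma \ref{NormaV1}) are accurate and only make explicit what the paper leaves implicit.
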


\begin{proof}
Recall that by considering the quotient partition of $\L/\L^{0}$, we have that \[\lambda_{1}(\L_{\v_{1}}^{(r,s)})=\min_{d \ge 0 } (\min_{\alpha \in S_{d} \setminus \{0\}} f(\alpha)).\] Therefore, thanks to Theorem \ref{AlmostMain}, $\displaystyle \lambda_{1}(\L_{\v_{1}}^{(r,s)})=\min\{ \min_{\alpha \in S_{0} \setminus \{0\}} f(\alpha), aA+B \}$. On the other hand, $f(\alpha)=A\| \alpha\|^{2}$ for $\alpha \in S_{0}=\L^{0}$. The result follows from Corollary \ref{TraceZero}.
\end{proof}
 
 We are ready to summarize our results in the main theorem of the paper:
 
\begin{theorem}\label{main}

Let $\L \subseteq \R^{N}$ be a tame lattice with $\v_{1}$ and $\{e_{1},...,e_{N}\}$ satisfying the conditions of Definition \ref{Lagrangian}. Let $a:=\langle e_{1}, e_{1}\rangle$ and $h=-\langle e_{1}, e_{2}\rangle.$  Let $r, s$ be integers such that  $0 \neq |r| <N$ and let $m:=r+sN.$ Suppose that 
\[\frac{Na-1}{N^2-1}\leq \left(\frac{m}{r}\right)^2\leq \frac{(aN-1)(N+1)}{N-1}.\]

Then the lattice $\L_{\v_1}^{(r,s)}$ is a sub-lattice of $\L$ of index $m|r|^{N-1}$, with minimum \[\lambda_1(\L_{\v_1}^{(r,s)})=ar^2+\frac{m^2-r^2}{N}\] and with basis of minimal vectors \[\{re_1+s\v_1,re_2+s\v_1,\dots,re_N+s\v_1\}.\]
\end{theorem}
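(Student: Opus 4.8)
The plan is to reduce Theorem \ref{main} to the already-established Corollary \ref{ElUltimoCoro} by showing that the given two-sided inequality on $(m/r)^2$ is precisely what is needed to (i) guarantee the hypothesis $aA+B \le N(A+NB)$ of that corollary, and (ii) force the minimum in $\min\{2A(a+h), aA+B\}$ to be attained by $aA+B$. Recall $A = r^2$ and $B = (m^2-r^2)/N$, so $aA+B = ar^2 + (m^2-r^2)/N$, which is exactly the claimed value of $\lambda_1(\L_{\v_1}^{(r,s)})$; thus once both (i) and (ii) are in place, Corollary \ref{ElUltimoCoro} immediately gives the stated minimum, and Proposition \ref{prop2} together with Corollary \ref{CoroA}/Proposition \ref{Index} gives that $\{re_1+s\v_1,\dots,re_N+s\v_1\} = \{\Phi_{(r,s)}(e_1),\dots,\Phi_{(r,s)}(e_N)\}$ is a basis of $\L_{\v_1}^{(r,s)}$, each vector having square norm $aA+B = \lambda_1$, hence a basis of minimal vectors; the index statement is Proposition \ref{prop2} verbatim.

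First I would translate inequality (i), $aA+B \le N(A+NB)$, into a condition on $(m/r)^2$. Substituting $A=r^2$, $B=(m^2-r^2)/N$ and clearing the $N$: $ar^2 N + m^2 - r^2 \le N^2 r^2 + N(m^2 - r^2)$, i.e. $r^2(aN - 1 - N^2 + N) \le m^2(N-1)$, i.e. $r^2\big((N+1) - (N^2 - aN + 1)\big)\le \dots$ — I would simply collect terms carefully to get $r^2(aN - 1) \le m^2(N-1)\cdot\frac{1}{?}$; the cleanest route is to recognize that the upper bound $\left(\frac{m}{r}\right)^2 \le \frac{(aN-1)(N+1)}{N-1}$ is exactly the rearrangement of $aA+B \le f(\v_1) = N(A+NB)$ after using $a = 1+(N-1)h$ is \emph{not} needed here — only $a$ and $N$ appear. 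So I would verify by direct algebra that $aA+B \le N(A+NB) \iff (aN-1)r^2 \le \frac{(N-1)}{(N+1)} m^2 \cdot(N+1)/(N-1)$... more precisely I would just show $aA + B \le N(A+NB)$ is equivalent to $(m/r)^2 \le \frac{(aN-1)(N+1)}{N-1}$ by cross-multiplication, which is a short computation. That handles the upper bound in the hypothesis.

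Next, for (ii) I need $aA + B \le 2A(a+h)$. Using $h = \frac{a-1}{N-1}$ (from Lemma \ref{lemmaB}'s identity $a = 1+(N-1)h$), we have $2A(a+h) = 2r^2\left(a + \frac{a-1}{N-1}\right) = 2r^2 \cdot \frac{aN - 1}{N-1}$. So the condition $ar^2 + \frac{m^2-r^2}{N} \le \frac{2r^2(aN-1)}{N-1}$ becomes, after multiplying by $N$ and rearranging, $m^2 - r^2 \le r^2\left(\frac{2N(aN-1)}{N-1} - aN\right) = r^2\cdot\frac{2N(aN-1) - aN(N-1)}{N-1} = r^2\cdot\frac{aN(N+1) - 2N}{N-1} = r^2 \cdot \frac{N(aN + a - 2)}{N-1}$; then adding $r^2$ to both sides and simplifying should yield exactly $(m/r)^2 \le \frac{(aN-1)(N+1)}{N-1}$ again — wait, that would make the lower bound redundant, which it is not. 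Let me instead anticipate that the lower bound $\frac{Na-1}{N^2-1} \le (m/r)^2$ is what guarantees $aA+B \le 2A(a+h)$ fails in the \emph{other} direction is wrong too. The correct reading: the \emph{lower} bound ensures $2A(a+h) \ge aA+B$ is automatically... I would resolve this by carefully doing both reductions; I expect one inequality ($aA+B \le f(\v_1)$) to give the upper bound and the other ($aA + B \le 2A(a+h)$, i.e. $\lambda_1$ is not the $\L^0$-minimum) to give the lower bound, since $2A(a+h)$ scales like $r^2$ while $aA+B$ grows with $m^2$, so making $m^2$ too small relative to $r^2$ would... actually make $aA+B$ \emph{smaller}, so small $m^2$ helps (ii) — meaning the lower bound must instead come from requiring $aA+B$ to genuinely be the minimum of $f$ over $S_1$ being realized, or from $m \ne 0$ combined with a sharper lower estimate. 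I would sort this out by writing $2A(a+h) - (aA+B) = \frac{r^2}{N}\big(\text{linear in } a\big) - \frac{m^2}{N}$ and reading off the threshold.

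The main obstacle, then, is purely bookkeeping: carefully carrying out the two algebraic equivalences

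\begin{align*}
aA+B \le N(A+NB) &\iff \left(\tfrac{m}{r}\right)^2 \le \tfrac{(aN-1)(N+1)}{N-1},\\
aA+B \le 2A(a+h) &\iff \left(\tfrac{m}{r}\right)^2 \ge \tfrac{Na-1}{N^2-1},
\end{align*}

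using the tame-lattice identity $a = 1 + (N-1)h$ to eliminate $h$, and then invoking Corollary \ref{ElUltimoCoro} to conclude $\lambda_1(\L_{\v_1}^{(r,s)}) = aA+B = ar^2 + \frac{m^2-r^2}{N}$. Once $\lambda_1$ is identified, the basis-of-minimal-vectors claim is immediate from Proposition \ref{prop2}: the $N$ vectors $\Phi_{(r,s)}(e_i) = re_i + s\v_1$ form a basis of $\L_{\v_1}^{(r,s)}$ (Proposition \ref{Index}), and each has square norm $ar^2 + \frac{m^2-r^2}{N} = \lambda_1$, so they all lie in $S(\L_{\v_1}^{(r,s)})$. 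The index assertion $[\L : \L_{\v_1}^{(r,s)}] = m|r|^{N-1}$ is restated directly from Proposition \ref{prop2}. I do not anticipate any conceptual difficulty beyond making sure the direction of each inequality is tracked correctly and that the edge behavior (e.g. the possibility $2A(a+h) = aA+B$, where both are minimal and the basis claim still holds) is noted.
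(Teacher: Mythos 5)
Your route is exactly the paper's: reduce to Corollary \ref{ElUltimoCoro}, verify the two inequalities $aA+B\le N(A+NB)$ and $aA+B\le 2A(a+h)$, translate each into one side of the hypothesis on $(m/r)^2$, and get the index and the minimal basis from Propositions \ref{Index} and \ref{prop2}. The one genuine error is that the two equivalences in your closing display are swapped, and as written both are false. The correct correspondence is the opposite one: since $N(A+NB)=Nr^2+N(m^2-r^2)=Nm^2$, the inequality $aA+B\le N(A+NB)$ reads $aNr^2+m^2-r^2\le N^2m^2$, i.e. $(aN-1)r^2\le (N^2-1)m^2$, which is the \emph{lower} bound $\frac{Na-1}{N^2-1}\le\left(\frac{m}{r}\right)^2$; whereas, using $a=1+(N-1)h$, one has $2A(a+h)=\frac{2r^2(aN-1)}{N-1}$, and $aA+B\le 2A(a+h)$ rearranges to the \emph{upper} bound $\left(\frac{m}{r}\right)^2\le\frac{(aN-1)(N+1)}{N-1}$ --- which is precisely the computation you carried out for (ii) and then talked yourself out of. Your heuristic was in fact correct: $aA+B$ grows with $m^2$ while $2A(a+h)$ is a fixed multiple of $r^2$, so the comparison with the $\L^{0}$-minimum $2A(a+h)$ caps $m^2$ from above, and the comparison with $f(\v_1)=Nm^2$ (which grows faster in $m^2$ than $aA+B$ does) is what forces $m^2$ to be large enough, giving the lower bound. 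The confusion arose only from your initial, unverified assignment of (i) to the upper bound.

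Once the assignments are corrected, the rest of your argument is the paper's proof verbatim: Corollary \ref{ElUltimoCoro} yields $\lambda_1(\L_{\v_1}^{(r,s)})=aA+B=ar^2+\frac{m^2-r^2}{N}$, Proposition \ref{prop2} (via Proposition \ref{Index} and the injectivity of $\Phi_{(r,s)}$) gives that $\{re_1+s\v_1,\dots,re_N+s\v_1\}$ is a basis all of whose vectors have this common square norm, hence a basis of minimal vectors, and the index $m|r|^{N-1}$ is Proposition \ref{prop2} directly. Your remark about the boundary case $aA+B=2A(a+h)$ is fine: the corollary still identifies $\lambda_1=aA+B$ there, so the conclusion is unaffected.
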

\begin{proof}
 By Corollary \ref{ElUltimoCoro}, the lattice $\L_{\v_1}^{(r,s)}$ has a minimal basis with minimal norm $\displaystyle aA+B$ if and only if 
 \begin{itemize}
 
 \item $Aa+B \leq N(A+NB)$.
 
 \item $Aa+ B \leq 2A(a+h).$
 
 \end{itemize}

Recall that $A=r^2$ and $B=\frac{m^2-r^2}{N}$. Thus, using that $\frac{B}{A}= \frac{1}{N} \left(\left(\frac{m}{r}\right)^2 -1\right)$ the inequality \[ Aa+B \leq N(A+NB) \ \mbox{turns into} \ \displaystyle \frac{Na-1}{N^2-1}\leq \left(\frac{m}{r}\right)^2 \]and using that $a=1+(N-1)h$, see the proof of Lemma \ref{lemmaB},  \[ Aa+ B \leq 2A(a+h)  \ \mbox{turns into } \ \left(\frac{m}{r}\right)^2\leq \frac{(aN-1)(N+1)}{N-1}.\]

\end{proof}

\begin{corollary}\label{LagragianNumberFieldCoro}
Let $N \ge 2$ be an integer. Let $K$ be a tame totally real degree $N$ number field. Let $m$ be an integer such that  $m\equiv \pm 1 \pmod{N}$. Suppose that $O_{K}$ has an integral Lagrangian basis $\{e_{1},...,e_{N}\}$ such that $1=e_{1}+e_{2}+...+e_{N}$. Let $a:=\langle e_{1}, e_{1}\rangle$ and $h=-\langle e_{1}, e_{2}\rangle$, and suppose that 
\[\frac{Na-1}{N^2-1}\leq m^2\leq \frac{(aN-1)(N+1)}{N-1}.\]
Then, the lattice
\[\{x \in O_{K}: {\rm Tr}_{K/\Q}(x) \equiv 0\pmod{m}\} \] is a sub-lattice of $O_{K}$ that has a minimal basis with minimum $\lambda_{1}=a+\frac{m^2-1}{N}.$ 

\end{corollary}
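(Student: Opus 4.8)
The plan is to derive Corollary \ref{LagragianNumberFieldCoro} as a direct specialization of Theorem \ref{main} together with Lemma \ref{Cong1}, so essentially no new computation is required. First I would observe that under the hypotheses, $O_K$ viewed via the Minkowski embedding is a tame lattice in $\R^N$ in the sense of Definition \ref{Lagrangian}: the given integral Lagrangian basis $\{e_1,\dots,e_N\}$ with $e_1+\dots+e_N=1$ satisfies conditions (1)--(4) because $1$ embeds as the all-ones vector $\v_1$, the trace form $\langle x,y\rangle = {\rm Tr}_{K/\Q}(xy)$ on the totally real field $K$ is exactly the standard inner product on the Minkowski embedding, and then ${\rm T}_{\v_1}(x)=\langle x,\v_1\rangle = {\rm Tr}_{K/\Q}(x)$. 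In particular $\v_1\in\L\cap\L^*$ since $\L=O_K$ is integral. Thus all the quantities $a=\langle e_1,e_1\rangle$, $h=-\langle e_1,e_2\rangle$ match the ones in Theorem \ref{main}.

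Next I would apply Lemma \ref{Cong1}: since $K$ is tame, the trace is surjective onto $\Z$, and for $m\equiv\pm 1\pmod N$ we have
\[
\{x\in O_K : {\rm Tr}_{K/\Q}(x)\equiv 0\pmod m\} = \L_{{\rm T},1}^{(\pm 1,\, s)}, \qquad s=\frac{m\mp 1}{N}.
\]
So the congruence sublattice in the statement is precisely $\L_{\v_1}^{(r,s)}$ with $r=\pm 1$ and this $s$, for which $|r|=1<N$ and $m=r+sN$. Then I would invoke Theorem \ref{main} with $r=\pm1$: since $(m/r)^2 = m^2$, the hypothesis $\frac{Na-1}{N^2-1}\le m^2\le \frac{(aN-1)(N+1)}{N-1}$ is exactly the hypothesis of Theorem \ref{main}, and the conclusion gives that $\L_{\v_1}^{(\pm1,s)}$ has minimum $\lambda_1 = a r^2 + \frac{m^2-r^2}{N} = a + \frac{m^2-1}{N}$ and a basis of minimal vectors $\{re_i + s\v_1\}$, i.e.\ $\{\pm e_i + s\cdot 1\}$.

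Finally I would note that the index statement of Theorem \ref{main}, $[\L:\L_{\v_1}^{(r,s)}]=m|r|^{N-1}=m$, agrees with Lemma \ref{CongruencemLattice}, confirming consistency, though it is not needed for the corollary as stated. The only genuinely delicate point in the argument is the very first step: verifying that a totally real tame field with an integral Lagrangian basis normalized so that $e_1+\dots+e_N=1$ really does satisfy \emph{all} of Definition \ref{Lagrangian}, in particular that ${\rm T}_{\v_1}$ as defined via the inner product coincides with the field trace and that $\v_1$ lands in $\L\cap\L^*$. Once that identification is in place, everything else is a substitution $r=\pm1$, $(m/r)^2=m^2$ into the already-proved Theorem \ref{main}. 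I would therefore write the proof as: (i) identify $O_K$ as a tame lattice and ${\rm T}_{\v_1}$ with the trace; (ii) quote Lemma \ref{Cong1} to rewrite the congruence lattice as $\L_{\v_1}^{(\pm1,s)}$; (iii) quote Theorem \ref{main} with $r=\pm1$ to read off $\lambda_1$ and the minimal basis.
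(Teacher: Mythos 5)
Your proposal is correct and follows the paper's own route exactly: the paper proves this corollary by citing Lemma \ref{Cong1} to identify the congruence lattice with $\L_{\v_1}^{(\pm 1,s)}$ and then reading off the conclusion from Theorem \ref{main} with $r=\pm 1$, which is precisely your steps (ii) and (iii). Your step (i), spelling out why the Minkowski embedding of $O_K$ with the trace pairing is a tame lattice and why ${\rm T}_{\v_1}$ is the field trace, is a worthwhile elaboration of what the paper leaves implicit, but it is not a different argument.
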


\begin{proof}
The result follows immediately from Lemma \ref{Cong1} and Theorem \ref{main}.
\end{proof}

\begin{remark}
Note that for simplicity we have only mentioned the case $m\equiv \pm 1 \pmod{N}$. However, the more general values of $m=r+sN$ yield lattices with minimal bases inside $\{x \in O_{K}: {\rm Tr}_{K/\Q}(x) \equiv 0\pmod{m}\}.$

\end{remark}

\subsubsection{Some number fields with Lagrangian basis}\label{NumberFieldsWithLagrangianBasis}

There are several examples of real number fields containing an integral Lagrangian basis. One of the first families of such number fields was found in the mid 80s by Conner and Perlis while studying integral traces over tame Galois number fields of prime degree:

\begin{example}\label{ex1}
Let $p$ be a prime and let $K$ be a Galois number field of degree $p$. If $K$ is tame, which in this case is equivalent to say that $p$ does not ramify, then $O_{K}$ has an integral Lagrangian basis $\{e_{1},...,e_{p}\}$ with $a=\frac{n(p-1)+1}{p}$ and $h=\frac{n-1}{p}$  where $n$ is the conductor of $K$ (see \cite{CoPe}). Note that such values of $a$ and $h$ for for $N=p$ we have that $\frac{Na-1}{N^2-1}=\frac{n}{p+1}$ and $\frac{(aN-1)(N+1)}{N-1}=n(p+1)$. Therefore, if $m \equiv \pm 1\pmod{p}$ is such that $\frac{n}{p+1} \leq  m^2 \leq n(p+1) $, then thanks to Corollary \ref{LagragianNumberFieldCoro} \[\{x \in O_{K}: {\rm Tr}_{K/\Q}(x) \equiv 0\pmod{m}\}\] is a  sub-lattice of $O_{K}$ with a minimal basis and minimum $\lambda_{1}=\frac{n(p-1)+m^2}{p}.$ When restricting this example to the case $m \equiv 1\pmod{p}$ the results [Theorem 4.1,\cite{sueli}] and [Theorem 3.3,\cite{oliviera}] are recovered. 
\end{example}

Recently the results of Conner and Perlis about Trace forms over cyclic number fields, see \cite{BoMa}, have been generalized. Using these ideas, the example above can be extended to number fields of not necessarily  prime degree:

\begin{example}\label{ex2}
Let $K$ be a tame totally real abelian number field of degree $N$. If the conductor $n$ of $K$ is prime, then $O_{K}$ has an integral Lagrangian basis $\{e_{1},...,e_{N}\}$ with $a=\frac{n(N-1)+1}{N}$ and $h=\frac{n-1}{N}$. (The construction of such a basis can be done as in \cite[Lemma 3.3]{BoMa}; there, this basis is constructed for $N$ a prime power. However, the same exact proof works for any $N$.) Thus, as in the previous example our general construction can also be applied to such fields.  For instance, the field $K=\Q(\zeta_{13}+\zeta_{13}^{-1})$  is a tame real Galois extension of $\Q$ with Galois group $\Z/2\Z \times \Z/3\Z$ and of conductor $13$. The lattices $\{x \in O_{K}: {\rm Tr}_{K/\Q}(x) \equiv 0\pmod{5}\}$ and $\{x \in O_{K}: {\rm Tr}_{K/\Q}(x) \equiv 0\pmod{7}\}$ are sub-lattices of $O_{K}$ with minimal bases, and with respective minima equal to $15=\frac{13(6-1)+5^2}{6}$ and $19=\frac{13(6-1)+7^2}{6}$.
\end{example}

The two families in the above examples are not the only examples of number fields with Lagrangian integral basis. The following example shows that there exist abelian fields that are neither of prime degree or prime conductor and that have a Lagrangian integral basis.

\begin{example}\label{ex3}

Let $K$ be the number field defined by the polynomial $f:=x^4 - x^3 - 24x^2 + 4x + 16$. The field $K$  is $\Z/4\Z$-extension of $\Q$, has discriminant $5^3\cdot 13^3$, and conductor $n=65$. If $\{a_{1},a_{2}, a_{3}, a_{4}\}$ is the set of roots of $f$, then they form an integral basis of $O_{K}$, $a_{1}+a_{2}+a_{3}+a_{4}=1$, and the Gram matrix of the trace in such a basis is 
\[\begin{bmatrix} \ \ 49 & -16 & -16 & -16 \\ -16 & \ \ 49 & -16 & -16  \\ -16 & -16 & \ \ 49 & -16 \\ -16 & -16 & -16 &  \ \ 49  \end{bmatrix}.\] 
Hence, $O_{K}$ is a tame lattice with $a=49, h=16$, and $N=4$. Applying the bounds of Corollary \ref{LagragianNumberFieldCoro} here, we  obtain 
 $13 \leq m^2 \leq \frac{(aN-1)(N+1)}{N-1}=325$ for $m \equiv \pm 1 \pmod{4}$. This is equivalent to $m \in \{5,7,9,11,13,15,17\}.$ Hence, by applying Corollary \ref{LagragianNumberFieldCoro} to $K$ which is neither a prime degree nor a prime conductor number field, seven non-isometric sub-lattices of $O_{K}$ all with a minimal basis have been constructed.
\end{example}

\bibliographystyle{abbrv}
\bibliography{ref}

\noindent

{\footnotesize Mohamed Taoufiq Damir. Department of Mathematics and Systems Analysis, Aalto University, Espoo, Finland. ({\tt mohamed.damir@aalto.fi})}

{\footnotesize Guillermo Mantilla-Soler, Department of Mathematics, Universidad Nacional de Colombia,
Medell\'in, Colombia. Department of Mathematics and Systems Analysis, Aalto University, Espoo, Finland. ({\tt gmantelia@gmail.com})}

\end{document}